\documentclass[11pt]{article}
\usepackage{amsmath,amsxtra,amssymb,amsthm,amsfonts}
\usepackage{graphicx}
\usepackage[margin=1.05in]{geometry}
\usepackage[dvipsnames]{xcolor}
\usepackage{xurl}
\usepackage[pdftex,linktocpage=true,colorlinks,citecolor=blue,linkcolor=blue,pagebackref]{hyperref}
\usepackage{mathpazo}

\usepackage{caption}
\usepackage{pgfplots}
\usepackage{pgf}
\usepackage{tikz}
\usetikzlibrary{patterns}
\usetikzlibrary{arrows.meta}
\usepgfplotslibrary{patchplots}
\usetikzlibrary{pgfplots.patchplots}
\pgfplotsset{width=9cm,compat=1.5.1}

\newcommand\ip[2]{\ensuremath{\langle#1 , #2\rangle}} 

\newcommand\supp{\mathop{\rm supp}\nolimits}

\newcommand{\defeq}{\stackrel{\smash{\textnormal{\tiny def}}}{=}}

\def\N{\mathbb{N}}

\def\R{\mathbb{R}}

\def\M{\mathcal{M}}

\def\a{\mathbf{a}}

\def\e{\mathbf{e}}

\def\v{\mathbf{v}}
\def\w{\mathbf{w}}
\def\x{\mathbf{x}}
\def\y{\mathbf{y}}

\def\0{\mathbf{0}}

\numberwithin{equation}{section}
\newtheorem{lemma}{Lemma}[section]

\newtheorem{theorem}[lemma]{Theorem}
\newtheorem{corollary}[lemma]{Corollary}
\newtheorem{conjecture}[lemma]{Conjecture}

\newtheorem{example}{Example}

\begin{document}

\title{Hadamard powers of Gram matrices and \\ Generalized Cauchy--Schwarz inequalities}

\author{Nathaniel Johnston\textsuperscript{1} and Mohammad Sababheh\textsuperscript{2}}

\maketitle

\begin{abstract}
    The following inequality for vectors $\v,\w \in \R^n$ was considered in [Linear and Multilinear Algebra 74:1779--1798 (2026)]:\[\|\v^p\|\|\w^p\| - \ip{\v^p}{\w^p} \leq \|\v\|^p\|\w\|^p - \ip{\v}{\w}^p,\]
    where $\v^p$ is the vector that is obtained by raising each entry of $\v$ to the power $p$. It was shown that this inequality holds for all positive integers $p$, and it was conjectured that it holds for all real $p \geq 2$ when the entries of $\v$ and $\w$ are positive. A formula for the maximum amount by which the inequality's right-hand side can exceed its left-hand side for nonnegative unit vectors was also conjectured. We prove both of these conjectures, and we determine exactly when the inequality holds with equality. We also present a new generalization of this inequality to three or more vectors.\medskip

    \noindent \textbf{Keywords:} Cauchy--Schwarz inequality; Hadamard powers; matrix inequalities\medskip
	
	\noindent \textbf{MSC2020 Classification:} 15A45; 15B48
\end{abstract}

\addtocounter{footnote}{1}
\footnotetext[1]{Department of Mathematics \& Computer Science, Mount Allison University, Canada}
\addtocounter{footnote}{1}
\footnotetext[2]{Program of Mathematics, Abdullah Al Salem University, Kuwait}

\section{Introduction}\label{sec:intro}

The Cauchy--Schwarz inequality is one of the most fundamental inequalities in all of mathematics. In its most basic form, it states that $|\ip{\v}{\w}|\leq\|\v\|\|\w\|$ for all vectors $\v, \w \in \R^n$, where $\ip{\v}{\w}$ is the usual inner (dot) product and $\|\v\|$ is the usual Euclidean norm. This inequality has been generalized in many different ways, including to more exotic inner product spaces, and to unitarily invariant norms of matrices (see \cite{Ste04,BD95}, for example).

Our starting point is the inequality
\begin{align}\label{eq:cs_exp2_original}
    \|\v^2\|\|\w^2\| - \ip{\v^2}{\w^2} \leq \|\v\|^2\|\w\|^2 - \ip{\v}{\w}^2,
\end{align}
where $\v^2$ and $\w^2$ are the vectors obtained by squaring each entry in $\v$ and $\w$, respectively. This inequality was introduced in~\cite{MO_Lemma,JM19}, and it can be thought of as a refinement of the Cauchy--Schwarz inequality: while the latter says that both sides are nonnegative, the former also provides a nontrivial lower bound on the right-hand side.

In \cite{JPTV26}, it was asked what effect changing the exponent $2$ in Inequality~\eqref{eq:cs_exp2_original} has. In particular, it was shown that
\begin{align}\label{eq:intro_power_cs}
    \|\v^p\|\|\w^p\| - \ip{\v^p}{\w^p} \leq \|\v\|^p\|\w\|^p - \ip{\v}{\w}^p
\end{align}
for all positive integers $p$, where $\v^p$ denotes the vector obtained by raising each entry of $\v$ to the power $p$. It was furthermore conjectured that if the entries of $\v$ and $\w$ are all positive then Inequality~\eqref{eq:intro_power_cs} holds for all real (not just integer) $p \geq 2$ \cite[Conjecture~5.1]{JPTV26} (and it was noted that this threshold is sharp: the inequality fails for every $p \in (1,2)$).

The distinction between integer and non-integer powers is familiar in the study of Hadamard (entrywise) powers of positive semidefinite matrices. The Schur product theorem \cite[Theorem~7.5.3]{HJ13} ensures that positive integer Hadamard powers preserve positive semidefiniteness. For non-integer powers, however, the matrix size matters: it was shown in \cite{FH77} that the set of $k \times k$ positive semidefinite entrywise nonnegative matrices is closed under positive non-integer Hadamard powers larger than $k-2$, but not smaller. Numerous related properties were established in \cite{GKR15}, including a complete determination of which Hadamard powers are superadditive on the positive semidefinite entrywise nonnegative cone. This superadditivity theorem is the main tool behind our proof of \cite[Conjecture~5.1]{JPTV26}. In fact, this technique gives an inequality involving any number of vectors.

A second question raised in \cite{JPTV26} asked how large the difference between the right- and left-hand sides of Inequality~\eqref{eq:intro_power_cs} can be. Since rescaling the vectors also rescales the difference, this question was asked specifically for unit vectors, and numerics suggested that the largest possible difference (for entrywise nonnegative vectors) is
\begin{align*}
    1 - \left(\left\lfloor\frac{n+1}{2}\right\rfloor\left\lceil\frac{n+1}{2}\right\rceil\right)^{\frac{1-p}{2}}.
\end{align*}
We also prove this conjecture in the affirmative.

The paper is organized as follows. In Section~\ref{sec:prelims}, we introduce our notation, recall the conjectures from \cite{JPTV26}, and state the superadditivity theorem that we will use. In Section~\ref{sec:first_conjecture}, we prove Inequality~\eqref{eq:intro_power_cs} (i.e., the first conjecture) and its multi-vector generalization. In Section~\ref{sec:equality_etc}, we develop a lower bound on the difference between the right- and left-hand sides of Inequality~\eqref{eq:intro_power_cs}, and we use this lower bound to determine exactly when the inequality holds with equality. Finally, in Section~\ref{sec:second_conjecture} we develop an \emph{upper} bound on the difference between the right- and left-hand sides of Inequality~\eqref{eq:intro_power_cs}, and thus prove the second conjecture.

\section{Mathematical Preliminaries}\label{sec:prelims}

We now introduce our notation and the mathematical basics behind our results.

\subsection{Notation}\label{sec:notation}

We take the convention that $\N = \{1,2,3,\ldots\}$. For vectors $\v, \w \in \R^n$, $\|\v\|$ denotes the Euclidean norm (2-norm), $\ip{\v}{\w}$ denotes the Euclidean inner product (dot product), and $\supp(\v) = \{j : v_j \neq 0\}$. We use $\e_1$, $\e_2$, $\ldots$, $\e_n$ to denote the standard basis vectors of $\R^n$. We use $\M_n$ to denote the set of $n \times n$ real matrices and $\M_n^{+}$ for its subset of positive semidefinite (PSD) matrices, which are always symmetric for us. The identity matrix in $\M_n$ is denoted by $I$. The inequality $X \succeq Y$ means that $X - Y$ is positive semidefinite. For $i,j \in \{1,2,\ldots,n\}$, the $(i,j)$-entry of $X \in \M_n$ is denoted by $[X]_{i,j}$.

We denote the Hadamard (entrywise) product of real matrices $X$ and $Y$ of the same size by $X \odot Y$. For a positive real number $p$ and a real matrix $X$ with nonnegative entries, $X^{\odot p}$ is the $p$-th Hadamard power of $X$ (i.e. for all $i$ and $j$, $[X^{\odot p}]_{i,j} = [X]_{i,j}^p$). For a vector $\v \in \R_{\geq 0}^n$ we instead write $\v^p$ for its $p$-th Hadamard power, since (unlike with matrices) there is no chance of confusing the superscript with another form of exponentiation.

We denote the Gram matrix of a collection of vectors $\v_1, \v_2, \ldots, \v_k \in \R^n$ (i.e., the matrix whose $(i,j)$-entry equals $\ip{\v_i}{\v_j}$) by $G(\v_1, \v_2, \ldots, \v_k)$. We note that every Gram matrix is positive semidefinite, and conversely every positive semidefinite matrix is the Gram matrix of some set of vectors (these facts are standard; see \cite[Exercise~2.2.25]{JohALA} for example).

\subsection{The conjectures}\label{sec:conjectures}

We now recall the two conjectures from \cite{JPTV26} that we will resolve. The first conjecture was proved for all positive \emph{integer} values of $p$, so the remaining difficulty was its extension to real values of $p$.

\begin{conjecture}\label{conj:non_integer}
    Let $p \geq 2$ be a real number. For any pair of vectors $\v,\w \in \R_{> 0}^n$, we have
    \begin{align*}
        \| \v^p \|\| \w^p \| - \ip{\v^p}{\w^p} \leq \|\v\|^{p}\|\w\|^{p} - \ip{\v}{\w}^{p}.
    \end{align*}
\end{conjecture}

We will prove this conjecture (see the upcoming Corollary~\ref{cor:non_integer}), and even generalize it to a version that applies to $3$ or more vectors. Furthermore, we will prove it for all entrywise nonnegative (not just entrywise positive) vectors.

The second conjecture was not explicitly listed as such, but was a question that was raised shortly after the above conjecture in \cite{JPTV26}: how much can the right-hand side and the left-hand side in Conjecture~\ref{conj:non_integer} differ by? It was noted that numerics suggest the following:\footnote{Strictly speaking, this observation was only made in \cite{JPTV26} for unit vectors; the form given here comes from rescaling.}

\begin{conjecture}\label{conj:rhs_minus_lhs}
    Let $p \geq 2$ be a real number. For any pair of vectors $\v,\w \in \R_{\geq 0}^n$, we have
    \begin{align*}
        \big(\|\v\|^{p}\|\w\|^{p} - \ip{\v}{\w}^{p}\big) - \big(\| \v^p \|\| \w^p \| - \ip{\v^p}{\w^p}\big) \leq \left[1 - \left(\left\lfloor\frac{n+1}{2}\right\rfloor\left\lceil\frac{n+1}{2}\right\rceil\right)^{\frac{1-p}{2}}\right]\|\v\|^p\|\w\|^p.
    \end{align*}
\end{conjecture}

In other words, if we define
\begin{align*}
    \Delta_p(\v,\w) \defeq \big(\|\v\|^{p}\|\w\|^{p} - \ip{\v}{\w}^{p}\big) - \big(\| \v^p \|\| \w^p \| - \ip{\v^p}{\w^p}\big)
\end{align*}
then these two conjectures, taken together, say that
\[
    0 \leq \Delta_p(\v,\w) \leq \left[1 - \left(\left\lfloor\frac{n+1}{2}\right\rfloor\left\lceil\frac{n+1}{2}\right\rceil\right)^{\frac{1-p}{2}}\right]\|\v\|^p\|\w\|^p
\]
for all entrywise positive vectors $\v$ and $\w$. The strange constant on the right-hand side of this conjecture comes from noting that if $\v = (1,1,\ldots,1,0,0,\ldots,0)$ has $\lfloor (n+1)/2\rfloor$ nonzero entries and $\w = (0,0,\ldots,0,1,1,\ldots,1)$ has $\lceil (n+1)/2\rceil$ nonzero entries, then equality is attained in Conjecture~\ref{conj:rhs_minus_lhs}, and numerics were unable to find any pair of vectors with the same $\|\v\|$ and $\|\w\|$ for which the left-hand side was larger. We will show in Theorem~\ref{thm:rhs_minus_lhs} that this conjecture is indeed true.

\subsection{Superadditivity of Hadamard powers}\label{sec:hadamard_superadditive}

The primary tool that gets our proofs off the ground is the following result of Guillot, Khare, and Rajaratnam \cite[Theorem~5.1]{GKR15}, which says that Hadamard powers are superadditive on the entrywise nonnegative positive semidefinite matrices, as long as the exponent is a positive integer or at least as large as the matrix size:

\begin{theorem}\label{thm:hadamard_superadditivity}
	Suppose $k,n \in \N$ and $X_1, X_2, \ldots, X_n \in \M_k^{+}$ have nonnegative entries. For every $p \in \N \cup (k,\infty)$, we have
	\begin{align*}
	    \left( \sum_{j=1}^n X_j \right)^{\odot p} \succeq \sum_{j=1}^n X_j^{\odot p}.
	\end{align*}
\end{theorem}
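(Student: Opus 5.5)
The plan is to treat the two kinds of exponent separately, and in both cases to reduce to the case $n=2$ by induction on $n$. The reduction is immediate: the set of entrywise nonnegative matrices in $\M_k^{+}$ is closed under addition. So if $(A+B)^{\odot p} \succeq A^{\odot p} + B^{\odot p}$ for all such $A,B$, we can apply this with $A = \sum_{j<n} X_j$ and $B = X_n$. Combining with the inductive hypothesis $\big(\sum_{j<n} X_j\big)^{\odot p} \succeq \sum_{j<n} X_j^{\odot p}$ then gives the claim.

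For $p \in \N$ the argument is purely algebraic. Expanding $(A+B)^{\odot p}$ entrywise with the binomial theorem gives
\[
(A+B)^{\odot p} = \sum_{i=0}^{p} \binom{p}{i} A^{\odot i} \odot B^{\odot (p-i)}.
\]
By the Schur product theorem every term $A^{\odot i}\odot B^{\odot(p-i)}$ is PSD, and the coefficients are positive. The terms $i=0$ and $i=p$ are exactly $B^{\odot p}$ and $A^{\odot p}$, so the difference is a nonnegative combination of PSD matrices. No nonnegativity of entries is needed here.

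For real $p > k$ I would use an integral representation together with the FitzGerald--Horn theory \cite{FH77}. For entrywise nonnegative $A,B$ and $p>1$, the fundamental theorem of calculus applied entrywise gives
\begin{align*}
(A+B)^{\odot p} - B^{\odot p} = p\int_0^1 A \odot (B+tA)^{\odot (p-1)}\,dt, \qquad A^{\odot p} = p\int_0^1 A\odot (tA)^{\odot(p-1)}\,dt .
\end{align*}
Subtracting these yields
\[
(A+B)^{\odot p} - A^{\odot p} - B^{\odot p} = p\int_0^1 A \odot \Big[(B+tA)^{\odot(p-1)} - (tA)^{\odot(p-1)}\Big]\,dt .
\]
Since $A \succeq 0$, the Schur product theorem shows that it suffices to know that the bracket is PSD for each $t$. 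That is a monotonicity statement: if $C \succeq D$ are entrywise nonnegative with $C \geq D$ entrywise, then $C^{\odot q}\succeq D^{\odot q}$. Here we take $C = B+tA$ and $D = tA$, and $q=p-1>k-1$.

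The main obstacle is this monotonicity step. I would either cite it from \cite{FH77}, or reprove it by the same trick one level down:
\[
C^{\odot q}-D^{\odot q} = q\int_0^1 (C-D)\odot (D+s(C-D))^{\odot(q-1)}\,ds.
\]
This is PSD provided $(D+s(C-D))^{\odot(q-1)}$ is PSD, which is the FitzGerald--Horn theorem that non-integer Hadamard powers larger than $k-2$ preserve positivity on $k\times k$ entrywise nonnegative PSD matrices; here $q-1 > k-2$. If that preservation result must itself be proved, I would use FitzGerald--Horn's induction on $k$: take a Schur complement with respect to the last row and column, written as a rank-one update, and apply the same integral identity. A minor technicality is the zero entries, where $x^{q-1}$ with $q-1<1$ is not differentiable at $0$. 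I would handle this by first proving everything for entrywise positive matrices, replacing $X$ by $X+\varepsilon J$ where $J$ is the all-ones matrix (which is PSD), and then letting $\varepsilon\to 0$. This is valid by continuity and because the PSD cone is closed.
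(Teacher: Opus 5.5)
Your proof is correct. The paper itself does not prove this theorem: it quotes it from \cite[Theorem~5.1]{GKR15}. It only observes that iterating the two-matrix inequality $(X+Y)^{\odot p}\succeq X^{\odot p}+Y^{\odot p}$ a total of $n-1$ times gives the general case, which is exactly your first step.

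What you add is a self-contained proof of the two-matrix case, modulo FitzGerald--Horn. For $p\in\N$ you use the binomial expansion plus the Schur product theorem. For real $p>k$ you use the chain ``superadditivity at $p$ $\Leftarrow$ Loewner monotonicity at $p-1$ $\Leftarrow$ positivity preservation at $p-2$''. Each link is the entrywise fundamental theorem of calculus followed by a Schur product with a PSD factor ($A$, respectively $C-D=B$).

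This is essentially the standard argument behind the cited results. Compared with a bare citation, it makes the threshold transparent: each integration lowers the exponent by one, and $p>k$ is precisely what puts $p-2$ in the FitzGerald--Horn range $(k-2,\infty)$. It is also the same mechanism the paper uses in proving Theorem~\ref{thm:Deltap_lower_bound}, where $f'(t)$ brings in $H_i(t)^{\odot(p-2)}$ and positivity comes from \cite[Theorem~2.2]{FH77}.

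Two small remarks. First, your monotonicity lemma does not need the entrywise hypothesis $C\geq D$; it only needs $C,D$ entrywise nonnegative and $C-D\succeq 0$. Second, for $k\geq 2$ the zero-entry worry does not arise in your two integral identities, since $p-1>1$ and $p-2>0$ make the integrands continuous. The case $k=1$ is just scalar superadditivity of $x\mapsto x^p$ for $p\geq 1$.
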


The above theorem is perhaps more frequently stated in the following form: If $X,Y \in \M_k^{+}$ have nonnegative entries and $p \in \N \cup (k,\infty)$, then
\begin{align}\label{eq:hadamard_super2}
	(X+Y)^{\odot p} \succeq X^{\odot p} + Y^{\odot p}.
\end{align}
Indeed, this inequality is just the $n = 2$ case of Theorem~\ref{thm:hadamard_superadditivity}. Conversely, applying Inequality~\eqref{eq:hadamard_super2} a total of $n-1$ times recovers Theorem~\ref{thm:hadamard_superadditivity}. Furthermore, the set of possible values of the exponent $p$ that is stated by the theorem is optimal: for any non-integer $p \in (0,k)$, there exist $X,Y \in \M_k^{+}$ with nonnegative entries for which Inequality~\eqref{eq:hadamard_super2} fails.

\section{Conjecture~\ref{conj:non_integer} and a generalization to more vectors}\label{sec:first_conjecture}

We now present a generalization of Conjecture~\ref{conj:non_integer} to any number of vectors. First, given a positive real number $p$ and vectors $\v_1$, $\v_2$, $\ldots$, $\v_k \in \R_{\geq 0}^n$, we define
\begin{align*}
    D_p(\v_1,\v_2,\ldots,\v_k) \defeq G(\v_1,\v_2,\ldots,\v_k)^{\odot p} - G(\v_1^p,\v_2^p,\ldots,\v_k^p).
\end{align*}
We use superadditivity of Hadamard powers (Theorem~\ref{thm:hadamard_superadditivity}) to show that $D_p(\v_1,\v_2,\ldots,\v_k)$ is positive semidefinite:

\begin{theorem}\label{thm:multivector}
    Let $k,n \in \N$, $\v_1, \v_2, \ldots, \v_k \in \R_{\geq 0}^n$, and $p \in \N \cup (k,\infty)$. Then $D_p(\v_1,\v_2,\ldots,\v_k)$ is positive semidefinite. Equivalently,
    \begin{align}\label{eq:multivector}
        \left\|\sum_{j=1}^k a_j\v_j^p\right\|^2 \leq \sum_{i,j=1}^k a_i a_j\ip{\v_i}{\v_j}^p
    \end{align}
    for all $a_1, a_2, \ldots, a_k \in \R$.
\end{theorem}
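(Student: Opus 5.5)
The natural approach is to write the Gram matrix $G(\v_1,\ldots,\v_k)$ as a sum of rank-one matrices, one for each coordinate, and then apply Theorem~\ref{thm:hadamard_superadditivity}. For each coordinate $\ell \in \{1,2,\ldots,n\}$, let $\x_\ell \defeq ([\v_1]_\ell, [\v_2]_\ell, \ldots, [\v_k]_\ell)^T \in \R_{\geq 0}^k$ and set $X_\ell \defeq \x_\ell \x_\ell^T \in \M_k^{+}$. Each $X_\ell$ is positive semidefinite with nonnegative entries, and
\[
    G(\v_1,\ldots,\v_k) = \sum_{\ell=1}^n X_\ell,
\]
since $[X_\ell]_{i,j} = [\v_i]_\ell[\v_j]_\ell$ and summing over $\ell$ gives $\ip{\v_i}{\v_j}$.

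Next I apply Theorem~\ref{thm:hadamard_superadditivity} to the matrices $X_1,\ldots,X_n \in \M_k^{+}$, with the theorem's roles of $k$ and $n$ exactly as here. This is valid because $p \in \N \cup (k,\infty)$. It gives
\[
    G(\v_1,\ldots,\v_k)^{\odot p} \succeq \sum_{\ell=1}^n X_\ell^{\odot p}.
\]
The key observation is that a Hadamard power of a rank-one nonnegative matrix is again rank one: $X_\ell^{\odot p} = \x_\ell^p (\x_\ell^p)^T$. Hence $[X_\ell^{\odot p}]_{i,j} = [\v_i]_\ell^p [\v_j]_\ell^p$, and summing over $\ell$ yields
\[
    \sum_{\ell=1}^n X_\ell^{\odot p} = G(\v_1^p,\ldots,\v_k^p).
\]
Therefore $D_p(\v_1,\ldots,\v_k) \succeq 0$.

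For the equivalent form~\eqref{eq:multivector}, take any $\a = (a_1,\ldots,a_k)$ and write $\a^T D_p \a \geq 0$. We have $\a^T G(\v_1,\ldots,\v_k)^{\odot p}\a = \sum_{i,j} a_i a_j \ip{\v_i}{\v_j}^p$. Since the entries of the Gram matrix are nonnegative, the Hadamard power is well defined, and no sign issues arise. Also $\a^T G(\v_1^p,\ldots,\v_k^p)\a = \|\sum_j a_j \v_j^p\|^2$. Because positive semidefiniteness of the symmetric matrix $D_p$ is exactly nonnegativity of this quadratic form for all $\a$, the two statements are equivalent.

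There is no real obstacle here; the only step requiring care is the rank-one decomposition. That decomposition is what converts the Hadamard power of the full Gram matrix into the Gram matrix of the Hadamard powers, and the superadditivity theorem supplies everything else.
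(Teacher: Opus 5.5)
Your proposal is correct and follows essentially the same route as the paper. Both write $G(\v_1,\ldots,\v_k)$ as a sum of rank-one matrices $\x_\ell\x_\ell^T$ built from the rows of the matrix whose columns are the $\v_j$, apply Theorem~\ref{thm:hadamard_superadditivity}, identify $\sum_\ell(\x_\ell\x_\ell^T)^{\odot p}$ with $G(\v_1^p,\ldots,\v_k^p)$, and then read off Inequality~\eqref{eq:multivector} from $\a^T D_p(\v_1,\ldots,\v_k)\a \geq 0$.
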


\begin{proof}
    Let $V$ be the $n \times k$ matrix with $\v_1$, $\v_2$, $\ldots$, $\v_k$ as its columns, in that order. For each $r \in \{1,2,\ldots,n\}$, define $\x_r^T \in \R_{\geq 0}^k$ to be the $r$-th row of $V$, so that
    \begin{align*}
        G(\v_1,\v_2,\ldots,\v_k) & = V^TV = \sum_{r=1}^n\x_r\x_r^T \qquad \text{and} \\
        G\big(\v_1^p,\v_2^p,\ldots,\v_k^p\big) & = (V^{\odot p})^T(V^{\odot p}) = \sum_{r=1}^n\big(\x_r\x_r^T\big)^{\odot p}.
    \end{align*}
    Each $\x_r\x_r^T \in \M_k^{+}$ has nonnegative entries, so Theorem~\ref{thm:hadamard_superadditivity} tells us that
    \begin{align*}
        G(\v_1,\v_2,\ldots,\v_k)^{\odot p} = \left(\sum_{r=1}^n\x_r\x_r^T\right)^{\odot p} \succeq \sum_{r=1}^n\big(\x_r\x_r^T\big)^{\odot p} = G\big(\v_1^p,\v_2^p,\ldots,\v_k^p\big).
    \end{align*}
    This means exactly that $D_p(\v_1,\v_2,\ldots,\v_k)$ is positive semidefinite, as desired.
    
    To prove Inequality~\eqref{eq:multivector}, note that positive semidefiniteness of $D_p(\v_1,\v_2,\ldots,\v_k)$ means exactly that, for all $\a := (a_1, a_2, \ldots, a_k)^T \in \R^k$, we have
    \begin{align*}
        0 \leq \a^T D_p(\v_1,\v_2,\ldots,\v_k)\a & = \a^T G(\v_1,\v_2,\ldots,\v_k)^{\odot p}\a - \a^T G(\v_1^p,\v_2^p,\ldots,\v_k^p)\a \\
        & = \sum_{i,j=1}^k a_i a_j\ip{\v_i}{\v_j}^p - \sum_{i,j=1}^k a_i a_j\ip{\v_i^p}{\v_j^p} \\
        & = \sum_{i,j=1}^k a_i a_j\ip{\v_i}{\v_j}^p - \left\|\sum_{j=1}^k a_j\v_j^p\right\|^2.
    \end{align*}
    This completes the proof.
\end{proof}

The set of permissible exponents $\N \cup (k,\infty)$ in Theorem~\ref{thm:hadamard_superadditivity} is known to be sharp for all $n \geq 2$ even when the matrices $X_1$, $X_2$, $\ldots$, $X_n$ all have rank~$1$ (see \cite[Theorem~5.1(1)]{GKR15}), which immediately implies that the same set of permissible exponents in Theorem~\ref{thm:multivector} is sharp. However, for completeness we find it instructive to have an explicit example showing that the set $\N \cup (k,\infty)$ is sharp in Theorem~\ref{thm:multivector} for all $n \geq 2$:

\begin{example}\label{ex:exponent_range}
    Suppose $k \in \N$ and $p \in (0,k)$ is not an integer. Define $m := \lfloor p \rfloor$ and notice that $0 \leq m \leq k-1$. For $t > 0$ and $j \in \{1,2,\ldots,k\}$, define the vector
    \begin{align*}
        \v_j := \big(1,j\sqrt{t}\big)^T \in \R_{\geq 0}^2.
    \end{align*}
    Then the entries of $D_p(\v_1, \v_2, \ldots, \v_k)$ are
    \begin{align*}
        [D_p(\v_1, \v_2, \ldots, \v_k)]_{i,j} & = (1+tij)^p - 1 - t^p i^p j^p \\
        & = \left(\sum_{\ell=1}^m \binom{p}{\ell}t^\ell i^\ell j^\ell\right) - t^p i^p j^p + O(t^{m+1}) \quad \text{for all} \quad i,j \in \{1,2,\ldots,k\},
    \end{align*}
    where the second equality comes from the Taylor expansion of the function $t \mapsto (1+tij)^p$ around $t = 0$, and $\binom{p}{\ell} = p(p-1)\cdots(p-\ell+1)/\ell!$ (even though $p$ is not an integer). Define the vector $\a = (a_1, a_2, \ldots, a_k)^T$ by
    \begin{align*}
        a_j := \frac{(-1)^{m+1-j}}{j}\binom{m}{j-1}
    \end{align*}
    for $j \in \{1,2,\ldots,m+1\}$ and $a_j := 0$ for $j \in \{m+2,m+3,\ldots,k\}$. The formula for an $m$-th finite difference of the polynomial $f_\ell(x) := x^{\ell-1}$ (where $\ell \in \{1,2,\ldots,m\}$) at $x = 1$ gives
    \begin{align}\label{eq:mth_diff_a}
        \sum_{j=1}^k a_j j^\ell = \sum_{j=1}^k (-1)^{m+1-j}\binom{m}{j-1} f_\ell(j) = 0,
    \end{align}
    since every polynomial of degree less than $m$ has $m$-th finite difference equal to $0$.
    
    On the other hand, $\sum_{j=1}^k a_j j^p > 0$. Indeed, if $m = 0$ then this sum equals $1$. If $m \geq 1$ then this sum is the $m$-th finite difference of $g(x) := x^{p-1}$ at $x = 1$, which is positive by the repeated mean value theorem, since
    \begin{align}\label{eq:mth_diff_b}
        \frac{\mathrm{d}^m}{\mathrm{d}x^m}x^{p-1} = (p-1)(p-2)\cdots(p-m)x^{p-m-1} > 0.
    \end{align}
    Plugging Equations~\eqref{eq:mth_diff_a} and~\eqref{eq:mth_diff_b} into the formula for $[D_p(\v_1, \v_2, \ldots, \v_k)]_{i,j}$ then gives
    \begin{align*}
        \a^T D_p(\v_1,\ldots,\v_k)\a & = \sum_{\ell=1}^m \binom{p}{\ell}t^\ell\left(\sum_{j=1}^k a_j j^\ell\right)^2 - t^p\left(\sum_{j=1}^k a_j j^p\right)^2 + O(t^{m+1}) \\
        & = -t^p\left(\sum_{j=1}^k a_jj^p\right)^2 + O(t^{m+1}).
    \end{align*}
    This quantity is strictly negative for all sufficiently small $t > 0$, since $p < m+1$, so $D_p(\v_1, \v_2, \ldots, \v_k)$ is not positive semidefinite. Appending $0$ coordinates to each $\v_j$ gives similar examples in every dimension $n \geq 2$.
\end{example}

Conjecture~\ref{conj:non_integer} is a straightforward corollary of Theorem~\ref{thm:multivector}:

\begin{corollary}\label{cor:non_integer}
    Let $p \geq 2$ be a real number. For any pair of vectors $\v,\w \in \R_{\geq 0}^n$, we have
    \begin{align*}
        \| \v^p \|\| \w^p \| - \ip{\v^p}{\w^p} \leq \|\v\|^{p}\|\w\|^{p} - \ip{\v}{\w}^{p}.
    \end{align*}
\end{corollary}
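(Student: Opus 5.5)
We apply Theorem~\ref{thm:multivector} with $k = 2$, $\v_1 = \v$, $\v_2 = \w$. The permitted exponents are then $\N \cup (2,\infty)$, which contains every real $p \geq 2$: the value $p = 2$ is an integer, and every $p > 2$ lies in $(2,\infty)$. So for every real $p \geq 2$ the $2\times 2$ matrix
\begin{align*}
    D_p(\v,\w) = \begin{pmatrix} \|\v\|^{2p} - \|\v^p\|^2 & \ip{\v}{\w}^p - \ip{\v^p}{\w^p} \\ \ip{\v}{\w}^p - \ip{\v^p}{\w^p} & \|\w\|^{2p} - \|\w^p\|^2 \end{pmatrix}
\end{align*}
is positive semidefinite. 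Here we used $\ip{\v}{\v}^p = \|\v\|^{2p}$ and $\ip{\v^p}{\v^p} = \|\v^p\|^2$.

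Next we choose the vector $\a$ in Inequality~\eqref{eq:multivector} so that the inequality becomes the desired one. If $\v^p = \0$ or $\w^p = \0$, then $\v = \0$ or $\w = \0$, and both sides of the claimed inequality vanish. Otherwise we take
\[
    a_1 = \frac{1}{\|\v^p\|}, \qquad a_2 = -\frac{1}{\|\w^p\|}.
\]
The left-hand side of \eqref{eq:multivector} is then
\[
    \left\|\frac{\v^p}{\|\v^p\|} - \frac{\w^p}{\|\w^p\|}\right\|^2 = 2 - \frac{2\ip{\v^p}{\w^p}}{\|\v^p\|\|\w^p\|}.
\]
The right-hand side is
\[
    \frac{\|\v\|^{2p}}{\|\v^p\|^2} + \frac{\|\w\|^{2p}}{\|\w^p\|^2} - \frac{2\ip{\v}{\w}^p}{\|\v^p\|\|\w^p\|}.
\]

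We now convert this into the stated form. Multiply through by $\|\v^p\|\|\w^p\|/2$. Set $\alpha = \|\v\|^p/\|\v^p\|$ and $\beta = \|\w\|^p/\|\w^p\|$, and write $c = \|\v^p\|\|\w^p\|$. The inequality becomes
\[
    c - \ip{\v^p}{\w^p} \leq \tfrac{1}{2}(\alpha^2+\beta^2)\,c - \ip{\v}{\w}^p.
\]
Note that $\tfrac12(\alpha^2+\beta^2)\,c$ is the arithmetic mean of $\alpha^2 c$ and $\beta^2 c$, whereas the target has $\|\v\|^p\|\w\|^p = \alpha\beta c$, their geometric mean. So AM--GM points the wrong way, and this is the main obstacle.

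To fix it we exploit the homogeneity of the statement. Replacing $\v$ by $s\v$ and $\w$ by $s^{-1}\w$ for $s > 0$ leaves every term of the target inequality unchanged, since each is bilinear-homogeneous of degree $p$ in $\v$ and in $\w$. It does, however, replace $\alpha$ by $s^{p}\cdot\|\v\|^p/(s^p\|\v^p\|)$, so $\alpha$ and $\beta$ themselves are unchanged. Thus homogeneity alone does not help, and rescaling must instead enter through $\a$.

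We therefore take the general choice $a_1 = t/\|\v^p\|$ and $a_2 = -1/(t\|\w^p\|)$ with $t > 0$. Then the terms $\tfrac12(t^2\alpha^2 + t^{-2}\beta^2)\,c$ appear instead. We choose $t^2 = \beta/\alpha$, which gives exactly $\alpha\beta c = \|\v\|^p\|\w\|^p$. The left-hand side stays $c - \ip{\v^p}{\w^p}$, because the cross term's coefficient is $-2a_1a_2 c\,\|\v^p\|^{-1}\|\w^p\|^{-1}\cdot\|\v^p\|\|\w^p\|$, independent of $t$, and the squared terms give $\tfrac12(t^2+t^{-2})c$.

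This last point is the delicate step, and it is not quite right as stated. The $t$-dependence also enters the left side, as $\tfrac12(t^2+t^{-2})c$, and it does not cancel. The cleaner route is to view $D_p(\v,\w)\succeq 0$ directly. Writing $A = \|\v\|^{2p} - \|\v^p\|^2$, $B = \|\w\|^{2p} - \|\w^p\|^2$, and $C = \ip{\v}{\w}^p - \ip{\v^p}{\w^p}$, positive semidefiniteness gives $C \leq \sqrt{AB}$. By the reverse Cauchy--Schwarz (Acz\'el-type) inequality,
\[
    \sqrt{AB} \leq \|\v\|^p\|\w\|^p - \|\v^p\|\|\w^p\|,
\]
valid since $\|\v\|^{p} \geq \|\v^p\|$ and $\|\w\|^p \geq \|\w^p\|$. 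These two facts follow from $A \geq 0$ and $B \geq 0$. The Acz\'el bound itself is the elementary inequality $(x^2-y^2)(u^2-z^2) \leq (xu-yz)^2$ for $x\ge y\ge0$ and $u\ge z\ge 0$. Rearranging $C \leq \|\v\|^p\|\w\|^p - \|\v^p\|\|\w^p\|$ gives exactly the corollary.
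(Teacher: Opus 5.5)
Your final argument is correct and is essentially the paper's proof. Both come from the $k=2$ case of Theorem~\ref{thm:multivector}; the paper normalizes to $\|\v\|=\|\w\|=1$, tests $D_p$ of the normalized vectors against $\a=(1,-1)$ and finishes with AM--GM, while you use $C^2\le AB$ plus the Acz\'el bound, which avoids normalizing. Your abandoned $t$-route also works: with $t^2=\beta/\alpha$, your $\a$ is a positive multiple of $(1/\|\v\|^p,-1/\|\w\|^p)$, the paper's test vector before normalization, and the left side $\tfrac12(t^2+t^{-2})c-\ip{\v^p}{\w^p}$ is at least $c-\ip{\v^p}{\w^p}$, so AM--GM points the right way there and you could have stopped.
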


\begin{proof}
    Note that if $\v = \0$ or $\w = \0$ then the inequality is trivial, so by rescaling we can assume without loss of generality that $\|\v\| = \|\w\| = 1$. Now take $k = 2$, $a_1 = 1$, $a_2 = -1$, $\v_1 = \v$, and $\v_2 = \w$ in Theorem~\ref{thm:multivector} to see that
    \begin{align*}
        \| \v^p \|^2 - 2\ip{\v^p}{\w^p} + \| \w^p \|^2 & = \big\| \v^p - \w^p \big\|^2 \\
        & \leq \ip{\v}{\v}^p - \ip{\v}{\w}^p - \ip{\w}{\v}^p + \ip{\w}{\w}^p \\
        & = 2 - 2\ip{\v}{\w}^p.
    \end{align*}
    Since $2\|\v^p\|\|\w^p\| \leq \|\v^p\|^2 + \|\w^p\|^2$ (e.g., by the AM--GM inequality), the above inequality implies
    \[
        2\|\v^p\|\|\w^p\| - 2\ip{\v^p}{\w^p} \leq 2 - 2\ip{\v}{\w}^p.
    \]
    Dividing both sides by $2$ proves the corollary for entrywise nonnegative unit vectors. Rescaling then proves it for arbitrary entrywise nonnegative vectors.
\end{proof}

\section{The equality case of, and an improvement to, Corollary~\ref{cor:non_integer}}\label{sec:equality_etc}

Corollary~\ref{cor:non_integer} established that $\Delta_p(\v,\w) \geq 0$. In this section, we improve upon this result by providing a nonnegative (and often strictly positive) lower bound on $\Delta_p(\v,\w)$. As a result of this stronger result, it will then be relatively straightforward to identify exactly when $\Delta_p(\v,\w) = 0$ occurs.

Since $\Delta_p$ is homogeneous in $\v$ and $\w$, it suffices to consider unit vectors. For $p > 2$, define
\begin{align*}
    C_p \defeq \frac{1}{8}\min\{p-1,2\}\min\{p-2,2\}.
\end{align*}
The right-hand side of the following theorem is nonnegative, and typically strictly positive, and thus provides a strengthening of Corollary~\ref{cor:non_integer} for all $p > 2$:

\begin{theorem}\label{thm:Deltap_lower_bound}
    Let $p > 2$ and let $\v, \w \in \R_{\geq 0}^n$ be unit vectors. Then
    \begin{align*}
        \Delta_p(\v,\w) \geq \frac{1}{2}\big(\|\v^p\|-\|\w^p\|\big)^2 + C_p\big(2 - \|\v^2\|^2 - \|\w^2\|^2\big)\big(1 - \ip{\v}{\w}^2\big).
    \end{align*}
\end{theorem}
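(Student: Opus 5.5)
The plan is to first peel off the term $\frac12(\|\v^p\|-\|\w^p\|)^2$ exactly, and then reduce the theorem to a quantitative ("defect") version of Theorem~\ref{thm:hadamard_superadditivity} for $2\times 2$ rank-one matrices. Write $a = \|\v^p\|$, $b = \|\w^p\|$ and $c = \ip{\v}{\w}$. Since $\|\v^p-\w^p\|^2 = a^2+b^2-2\ip{\v^p}{\w^p}$ and $\frac12(a-b)^2 = \frac12(a^2+b^2)-ab$, a direct rearrangement gives, for unit vectors,
\begin{align*}
    \Delta_p(\v,\w) = \tfrac12\big(\|\v^p\|-\|\w^p\|\big)^2 + \tfrac12 Q, \qquad Q \defeq 2-2c^p-\|\v^p-\w^p\|^2 .
\end{align*}
Here $Q$ is exactly $\a^T D_p(\v,\w)\a$ with $\a=(1,-1)^T$. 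This is the identity hidden in the proof of Corollary~\ref{cor:non_integer}, where the AM--GM step simply discarded the first square. It therefore suffices to show
\begin{align*}
    Q \geq 2C_p\big(2-\|\v^2\|^2-\|\w^2\|^2\big)\big(1-c^2\big).
\end{align*}

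Next, I would rewrite both factors on the right as sums over pairs of coordinates. We have $1-c^2=\det G(\v,\w)=\sum_{r<s}(v_rw_s-v_sw_r)^2$ (Lagrange's identity), and, using $\|\v\|=\|\w\|=1$,
\[
    2-\|\v^2\|^2-\|\w^2\|^2=\sum_{r\neq s}\big(v_r^2v_s^2+w_r^2w_s^2\big).
\]
On the left side, $Q = F\big(\sum_r \x_r\x_r^T\big)-\sum_r F(\x_r\x_r^T)$, where $\x_r=(v_r,w_r)^T$ and $F(X)=[X]_{1,1}^p+[X]_{2,2}^p-2[X]_{1,2}^p$, exactly as in the proof of Theorem~\ref{thm:multivector}. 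I would therefore prove a strengthened $2\times2$ superadditivity: for PSD entrywise nonnegative $X,Y$, the defect $F(X+Y)-F(X)-F(Y)$ is bounded below by an explicit nonnegative quantity. One would then add the rank-one pieces one at a time and sum the defects. Because the target is not additive over pairs, a cleaner route is to avoid induction and work directly with the diagonal-normalized $G$. Expand $c^p$ and the terms $(v_r^p-w_r^p)^2$ via the mean value theorem or Taylor's theorem with integral remainder for $t\mapsto t^{p}$ and $t\mapsto t^{p-1}$. Then bound the second-order remainder below, using $\frac{d^2}{dt^2}t^{p-1} \ge$ const on the relevant range. This is where the factors $\min\{p-1,2\}$ and $\min\{p-2,2\}$ should come from: they are lower bounds for $(p-1)$ and $(p-2)$ that remain valid once $p>3$, where the remainder comparisons change direction and must be done against the $p=3$ or $p=4$ integer case instead.

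The main obstacle is this quantitative defect step: producing a lower bound on $Q$ that factors as (a diagonal-spread term) $\times$ (a determinant term) with a uniform constant. My first attempt would be to write $c=\sum_r v_rw_r$ and $u_r=v_rw_r/c$, a probability vector. Then compare $c^p$ with $\sum_r(\cdot)$ using strong convexity of $t\mapsto t^{p/2}$ applied to the diagonal and off-diagonal entries separately. The determinant factor $1-c^2$ would arise from the gap between $c$ and $1$. The factor $2-\|\v^2\|^2-\|\w^2\|^2$ would arise from the variance of the distributions $(v_r^2)$ and $(w_r^2)$, which controls how far $\sum_r v_r^{2p}$ falls below $(\sum_r v_r^2)^p=1$. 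If the joint factorization resists, I would fall back on bounding each factor separately by half of $Q$ and using that both factors lie in $[0,1]$ (and $[0,2]$), at the cost of adjusting the constant.
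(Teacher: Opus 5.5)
Your opening reduction is correct and is the same as the paper's. With $c=\ip{\v}{\w}$ and $Q=\a^TD_p(\v,\w)\a$, where $\a=(1,-1)^T$, one has $2\Delta_p(\v,\w)-\big(\|\v^p\|-\|\w^p\|\big)^2=Q$, which is Equation~\eqref{eq:deficit_decomposition}. That identity is the easy part, though. The entire content of the theorem is the inequality $Q\ge 2C_p\,A\,(1-c^2)$, where $A=2-\|\v^2\|^2-\|\w^2\|^2$, and your proposal does not prove it. You list candidate tools: an unstated ``quantitative superadditivity'', scalar Taylor expansions of $c^p$ and of $(v_r^p-w_r^p)^2$, and strong convexity with weights $v_rw_r/c$. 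None of them is carried through to an inequality. Your account of where $\min\{p-1,2\}\min\{p-2,2\}$ comes from is a guess, not a derivation. Note also that $\frac{d^2}{dt^2}t^{p-1}=(p-1)(p-2)t^{p-3}$ has no positive lower bound near $t=0$ when $p>3$.

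Your stated fallback cannot work at any constant. If $\v=\w$ has two or more nonzero entries, then $Q=0$ while $A>0$ and $1-c^2=0$. If $\v=\e_1$ and $\w=\e_2$, then $Q=0$ while $A=0$ and $1-c^2=1$. So neither $Q\gtrsim A$ nor $Q\gtrsim 1-c^2$ is true, and any valid argument must produce the two factors together, as a product.

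The paper gets this product from a matrix version of your integral-remainder idea, applied to all rank-one pieces at once. Take $X_i=\x_i\x_i^T$ with $\x_i=(v_i,w_i)^T$, and set $G=\sum_iX_i$, $H_i(t)=tG+(1-t)X_i$, and $f(t)=\a^T\big(\sum_iX_i\odot H_i(t)^{\odot(p-1)}\big)\a$. Then $f(1)-f(0)=Q$ and
\[
f'(t)=(p-1)\,\a^T\Big(\sum_{i=1}^n X_i\odot(G-X_i)\odot H_i(t)^{\odot(p-2)}\Big)\a .
\]
\begin{itemize}
\item The factor $X_i\odot(G-X_i)$ produces $A$: the diagonal entries of $\sum_i X_i\odot(G-X_i)$ sum to $A$.
\item The factor $1-c$ comes from a bound $H_i(t)^{\odot(p-2)}\succeq\ell(t)I$. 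The matrix $H_i(t)-t(1-c)I$ is $2\times 2$, PSD and entrywise nonnegative, so its $(p-2)$-th Hadamard power is PSD by FitzGerald--Horn. Hence $H_i(t)^{\odot(p-2)}$ dominates the diagonal matrix with entries $a^{p-2}-(a-t(1-c))^{p-2}$, $a\in[t,1]$.
\item These entries are bounded below by $\ell(t)=(p-2)t(1-c)$ via the mean value theorem when $p\le 3$, and by $\ell(t)=t^{p-2}(1-c^{p-2})$ via monotonicity in $a$ when $p\ge 3$.
\item The Schur product theorem then gives $f'(t)\ge(p-1)\ell(t)A$.
\item Integrating over $t\in[0,1]$ and using $1-c\ge(1-c^2)/2$ and $1-c^{p-2}\ge\min\{(p-2)/2,1\}(1-c^2)$ yields exactly the constant $2C_p$.
\end{itemize}
This, not a comparison with integer cases, is the source of the splits at $p=3$ and $p=4$. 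Without some device that produces the product $A(1-c^2)$ in one stroke, your argument stops at the reduction.
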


\begin{proof}
    For $t \in [0,1]$, define
    \begin{align*}
        X_j & := \begin{bmatrix}
            v_j^2 & v_jw_j \\ v_jw_j & w_j^2
        \end{bmatrix}, \quad G := \sum_{i=1}^n X_i = \begin{bmatrix} 1 & \ip{\v}{\w} \\ \ip{\v}{\w} & 1 \end{bmatrix}, \quad \text{and} \quad H_j(t) := tG + (1-t)X_j
    \end{align*}
    for $j \in \{1,2,\ldots,n\}$. For $\x = (1,-1)^T$, define
    \begin{align*}
        f(t) := \x^T\left(\sum_{i=1}^n X_i \odot H_i(t)^{\odot(p-1)}\right)\x,
    \end{align*}
    and notice that
    \begin{align}\label{eq:deficit_decomposition}
        f(1) - f(0) = 2\Delta_p(\v,\w) - \big(\|\v^p\|-\|\w^p\|\big)^2.
    \end{align}
    In particular, this means that if we can bound the derivative of $f(t)$ then, by integrating the inequality, we will be able to bound $f(1) - f(0)$ and thus $\Delta_p(\v,\w)$. Well, differentiating $f$ gives
    \begin{align}\label{eq:fprime_xGH}
        f^\prime(t) = (p-1)\x^T\left(\sum_{i=1}^nX_i\odot(G-X_i) \odot H_i(t)^{\odot (p-2)}\right)\x.
    \end{align}
    Notice that $X_i$ and $G - X_i = \sum_{j \neq i}X_j$ are positive semidefinite for all $i$, so if we can find a lower bound on $H_i(t)^{\odot (p-2)}$ (in the positive semidefinite order) then we will get a lower bound on $f^\prime(t)$.

    To this end, first notice that the diagonal entries of $H_j(t)$ lie in the interval $[t,1]$. The matrix
    \[
        H_j(t) - t(1-\ip{\v}{\w})I = t\begin{bmatrix}
            \ip{\v}{\w} & \ip{\v}{\w} \\ \ip{\v}{\w} & \ip{\v}{\w}
        \end{bmatrix} + (1-t)X_j
    \]
    is PSD and entrywise nonnegative, so its $(p-2)$-nd Hadamard power is positive semidefinite by \cite[Theorem~2.2]{FH77}. Furthermore, $H_j(t)^{\odot(p-2)} - \big(H_j(t) - t(1-\ip{\v}{\w})I\big)^{\odot(p-2)}$ equals
    \begin{align*}
        \begin{bmatrix}
            \big(t + (1-t)v_j^2\big)^{p-2} - \big((1-t)v_j^2 + t\ip{\v}{\w}\big)^{p-2} & 0 \\
            0 & \big(t + (1-t)w_j^2\big)^{p-2} - \big((1-t)w_j^2 + t\ip{\v}{\w}\big)^{p-2}
        \end{bmatrix}
    \end{align*}
    so $H_j(t)^{\odot(p-2)} \succeq D$, where $D$ is this diagonal matrix.
    
    Notice that if $a \in [t,1]$ is a diagonal entry of $H_j(t)$ (i.e., $a = t + (1-t)v_j^2$ or $a = t + (1-t)w_j^2$), then
    \begin{align*}
        a^{p-2} - \big(a-t(1-\ip{\v}{\w})\big)^{p-2} \geq \ell(t) := \begin{cases}
            (p-2)t(1-\ip{\v}{\w}) & \text{if} \ \ 2 < p \leq 3,\\
            t^{p-2}(1-\ip{\v}{\w}^{p-2}) & \text{if} \ \ p \geq 3.
        \end{cases}
    \end{align*}
    Indeed, the first inequality follows from applying the mean value theorem to the function $x \mapsto x^{p-2}$: there exists $c \in [a - t(1-\ip{\v}{\w}), a]$ such that
    \[
        a^{p-2} - (a - t(1-\ip{\v}{\w}))^{p-2} = (p-2)t(1-\ip{\v}{\w})c^{p-3} \geq (p-2)t(1-\ip{\v}{\w}).
    \]
    The second inequality follows because $a^{p-2} - (a-t(1-\ip{\v}{\w}))^{p-2}$ is nondecreasing in $a \in [t,1]$ when $p \geq 3$, so its minimum is attained when $a = t$.
    
    It follows that $H_j(t)^{\odot(p-2)} \succeq \ell(t)I$ for all $j$. Plugging this lower bound into Equation~\eqref{eq:fprime_xGH} and using the Schur product theorem \cite[Theorem~7.5.3]{HJ13} shows that
    \begin{align*}
        f^\prime(t) & \geq (p-1)\x^T\left(\sum_{i=1}^nX_i\odot(G-X_i) \odot \ell(t)I\right)\x \\
        & = (p-1)\sum_{i=1}^n\left(v_i^2(1-v_i^2)+w_i^2(1-w_i^2)\right)\ell(t) \\
        & = (p-1)\big(2 - \|\v^2\|^2 - \|\w^2\|^2\big)\ell(t).
    \end{align*}
    Integrating this inequality from $t = 0$ to $t = 1$ gives
    \begin{align}\label{ineq:f1_f0}
        f(1) - f(0) \geq \big(2 - \|\v^2\|^2 - \|\w^2\|^2\big)\begin{cases}
            \dfrac{(p-1)(p-2)}{2}\big(1-\ip{\v}{\w}\big) & \text{if} \ \ 2 < p \leq 3,\\[2mm]
            1-\ip{\v}{\w}^{p-2} & \text{if} \ \ p \geq 3.
        \end{cases}
    \end{align}

    We now note two additional inequalities: $1 - \ip{\v}{\w} \geq (1-\ip{\v}{\w}^2)/2$ and $1 - \ip{\v}{\w}^{p-2} \geq \min\{(p-2)/2,1\}(1-\ip{\v}{\w}^2)$. The former inequality is straightforward, while the latter follows from concavity of the function $x \mapsto x^{(p-2)/2}$ when $2 < p \leq 4$ and from $\ip{\v}{\w}^{p-2} \leq \ip{\v}{\w}^2$ when $p \geq 4$. Plugging these two inequalities, as well as Equation~\eqref{eq:deficit_decomposition}, into Inequality~\eqref{ineq:f1_f0} gives
    \[
        2\Delta_p(\v,\w) \geq \big(\|\v^p\|-\|\w^p\|\big)^2 + 2C_p\big(2 - \|\v^2\|^2 - \|\w^2\|^2\big)\big(1 - \ip{\v}{\w}^2\big).
    \]
    Dividing both sides by $2$ proves the theorem.
\end{proof}

The next result completes the equality classification in Corollary~\ref{cor:non_integer}, including the additional possibilities at $p=2$.

\begin{theorem}\label{thm:equality_cases}
    Let $\v, \w \in \R_{\geq 0}^n$.
    \begin{itemize}
        \item If $p > 2$ then $\Delta_p(\v,\w) = 0$ if and only if $\v$ and $\w$ are linearly dependent or $\v$ and $\w$ each have at most one nonzero entry.

        \item If $p = 2$ then $\Delta_p(\v,\w) = 0$ if and only if $\|\w\|^2v_iv_j = \|\v\|^2w_iw_j$ for all $i \neq j$.
    \end{itemize}
\end{theorem}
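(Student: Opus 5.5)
For $p>2$, I will reduce to unit vectors by homogeneity; zero vectors are trivial and count as linearly dependent. For the ``if'' direction I will check the two cases directly. If $\w=c\v$ with $c\ge 0$ and both are unit vectors, then $\v=\w$, and both brackets in $\Delta_p$ vanish. If $\v=\e_i$ and $\w=\e_j$ with $i\neq j$, then both brackets equal $1$. For the ``only if'' direction I will use Theorem~\ref{thm:Deltap_lower_bound}. If $\Delta_p(\v,\w)=0$, both nonnegative terms on its right-hand side vanish, since $C_p>0$ for $p>2$. So either $\ip{\v}{\w}^2=1$, or $\|\v^2\|^2+\|\w^2\|^2=2$.

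In the first case, equality in Cauchy--Schwarz gives linear dependence. In the second case, $\|\v^2\|^2=\sum v_i^4\le(\sum v_i^2)^2=1$, with equality if and only if $\v$ has at most one nonzero entry. The same holds for $\w$. Both must then be equalities, so each vector has one nonzero entry. The squared-norm term from Theorem~\ref{thm:Deltap_lower_bound} is not even needed.

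For $p=2$, I will compute $\Delta_2$ explicitly. Write $A=\|\v\|^2$, $B=\|\w\|^2$, $\a=\v^2$, $\mathbf{b}=\w^2$. Then
\[
\|\v\|^2\|\w\|^2-\ip{\v}{\w}^2=\sum_{i<j}(v_iw_j-v_jw_i)^2 ,
\]
and
\[
\|\v^2\|\|\w^2\|-\ip{\v^2}{\w^2}=\|\a\|\|\mathbf{b}\|-\ip{\a}{\mathbf{b}} .
\]
The cleanest route is the matrix identity $D_2(\v,\w)=G^{\odot 2}-\sum_r (\x_r\x_r^T)^{\odot 2}=\sum_{r\ne s}(\x_r\x_r^T)\odot(\x_s\x_s^T)$, which is PSD. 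The proof of Corollary~\ref{cor:non_integer} with unit vectors then shows that $\Delta_2=0$ if and only if two things hold: $\|\v^2\|=\|\w^2\|$ (the AM--GM step is tight), and $\x^TD_2\x=0$ for $\x=(1,-1)^T$.

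I will not normalize, because the stated condition is not scale-free in a naive way. Instead I will use the general scaling $\x=(\|\w\|,-\|\v\|)^T$, under which AM--GM becomes $2\|\v\|^2\|\w\|^2\|\v^2\|\|\w^2\|\le\|\w\|^4\|\v^2\|^2+\|\v\|^4\|\w^2\|^2$. The condition $\x^TD_2\x=0$ reads
\[
\sum_{r\neq s}(\|\w\|v_rv_s-\|\v\|w_rw_s)^2=0 ,
\]
which says exactly that $\|\w\|v_rv_s=\|\v\|w_rw_s$ for all $r\ne s$.

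The main obstacle is to check two things. First, the $\x^TD_2\x$ condition must actually match the stated condition $\|\w\|^2v_iv_j=\|\v\|^2w_iw_j$. Second, it must already imply the AM--GM equality $\|\w\|^2\|\v^2\|=\|\v\|^2\|\w^2\|$. To get the right powers of the norms, I will redo this with the explicit scaling $\x=(\|\w\|^2,-\|\v\|^2)^T$, so that the off-diagonal condition becomes $\|\w\|^2v_iv_j=\|\v\|^2w_iw_j$. I will then verify the AM--GM condition from the identity $\|\v\|^4=\|\v^2\|^2+\sum_{i\ne j}v_i^2v_j^2$ and its analogue for $\w$. Together these show that the off-diagonal relation gives the equality $\|\w\|^4\|\v^2\|^2=\|\v\|^4\|\w^2\|^2$. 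Conversely, if $\Delta_2=0$, both inequalities in the chain must be tight, which yields the condition.
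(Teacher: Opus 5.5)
Your proposal is correct and is essentially the paper's proof. For $p>2$ you invoke Theorem~\ref{thm:Deltap_lower_bound} exactly as the paper does, and for $p=2$ your identity $2\|\v\|^2\|\w\|^2\Delta_2(\v,\w)=\sum_{r\neq s}(\|\w\|^2v_rv_s-\|\v\|^2w_rw_s)^2+(\|\w\|^2\|\v^2\|-\|\v\|^2\|\w^2\|)^2$ is just the unnormalized form of the paper's sum-of-squares formula for $\Delta_2$. Your hesitation about normalizing is unnecessary, because the condition $\|\w\|^2v_iv_j=\|\v\|^2w_iw_j$ is homogeneous of degree two in each of $\v$ and $\w$ separately, and the first scaling $(\|\w\|,-\|\v\|)^T$ should simply be dropped; your check that this condition forces $\|\w\|^2\|\v^2\|=\|\v\|^2\|\w^2\|$ usefully spells out the ``if'' direction that the paper calls straightforward.
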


\begin{proof}
    The ``if'' statements of the theorem are straightforward, so we only prove the ``only if'' statements. Furthermore, the theorem is trivial if $\v = \0$ or $\w = \0$, so (by homogeneity of $\Delta_p(\v,\w)$) we can assume without loss of generality that $\|\v\| = \|\w\| = 1$.

    When $p > 2$, Theorem~\ref{thm:Deltap_lower_bound}, together with $C_p > 0$ and $\|\v^2\|^2, \|\w^2\|^2 \leq 1$, shows that $\Delta_p(\v,\w) = 0$ forces either $\ip{\v}{\w} = 1$ or $\|\v^2\|^2 = \|\w^2\|^2 = 1$. These conditions mean, respectively, that $\v = \w$ or that both vectors are standard basis vectors, which gives the first half of the theorem.

    When $p = 2$, direct calculation shows that
    \begin{align*}
        \Delta_2(\v,\w) = \frac{1}{2}\big(\|\v^2\| - \|\w^2\|\big)^2 + \frac{1}{2}\sum_{\substack{i,j=1 \\ i\neq j}}^n(v_iv_j - w_iw_j)^2.
    \end{align*}
    It follows that $\Delta_2(\v,\w) = 0$ forces $v_iv_j = w_iw_j$ for all $i \neq j$, which gives the second half of the theorem.
\end{proof}

In particular, Theorem~\ref{thm:equality_cases} shows that, for $p > 2$, strictly positive vectors $\v$ and $\w$ have $\Delta_p(\v,\w) = 0$ if and only if they are scalar multiples of each other. The following corollary provides a quantitative version of this observation: if $\v$ and $\w$ are strictly positive unit vectors then it bounds how small $\Delta_p(\v,\w)$ can be in terms of $\|\v-\w\|$, which approaches $0$ as $\v$ and $\w$ get closer to being equal to each other.

\begin{corollary}\label{cor:interior_bound}
    Let $n \geq 2$, $p > 2$, and $0 < \varepsilon \leq 1/\sqrt{n}$. If $\v, \w \in \R_{\geq 0}^n$ are unit vectors that satisfy $v_j \geq \varepsilon$ and $w_j \geq \varepsilon$ for all $j \in \{1,2,\ldots,n\}$, then
    \begin{align*}
        C_p(n-1)\varepsilon^2\|\v-\w\|^2 \leq \Delta_p(\v,\w) \leq \frac{p}{2}\|\v-\w\|^2.
    \end{align*}
\end{corollary}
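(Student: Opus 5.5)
The plan is to get the two inequalities separately: the lower bound from Theorem~\ref{thm:Deltap_lower_bound}, and the upper bound from the definition of $\Delta_p$ together with Cauchy--Schwarz and an elementary one-variable inequality. Throughout, $\v$ and $\w$ are unit vectors, so $\|\v-\w\|^2 = 2(1-\ip{\v}{\w})$. The hypothesis $\varepsilon \leq 1/\sqrt{n}$ only guarantees that such unit vectors exist; the argument itself never uses it.

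\textbf{Lower bound.} Start from Theorem~\ref{thm:Deltap_lower_bound} and discard the nonnegative term $\frac{1}{2}(\|\v^p\|-\|\w^p\|)^2$. This leaves
\[
\Delta_p(\v,\w) \geq C_p\big(2-\|\v^2\|^2-\|\w^2\|^2\big)\big(1-\ip{\v}{\w}^2\big).
\]
Bound each factor below.
\begin{itemize}
\item For the first factor, use $\|\v\|=1$ to write
\[
1-\|\v^2\|^2=\sum_i v_i^2(1-v_i^2)=\sum_i v_i^2\sum_{j\neq i}v_j^2 .
\]
Since every $v_j^2\geq\varepsilon^2$, each inner sum is at least $(n-1)\varepsilon^2$. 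Because $\sum_i v_i^2=1$, this gives $1-\|\v^2\|^2\geq (n-1)\varepsilon^2$. The same holds for $\w$, so the first factor is at least $2(n-1)\varepsilon^2$.
\item For the second factor, nonnegativity gives $\ip{\v}{\w}\in[0,1]$. Hence
\[
1-\ip{\v}{\w}^2=(1-\ip{\v}{\w})(1+\ip{\v}{\w})\geq 1-\ip{\v}{\w}=\tfrac12\|\v-\w\|^2 .
\]
\end{itemize}
Multiplying the two bounds gives exactly $C_p(n-1)\varepsilon^2\|\v-\w\|^2$.

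\textbf{Upper bound.} By the definition of $\Delta_p$ with $\|\v\|=\|\w\|=1$,
\[
\Delta_p(\v,\w)=1-\ip{\v}{\w}^p-\big(\|\v^p\|\|\w^p\|-\ip{\v^p}{\w^p}\big).
\]
The bracketed term is nonnegative by the Cauchy--Schwarz inequality, so $\Delta_p(\v,\w)\leq 1-\ip{\v}{\w}^p$. For $x\in[0,1]$ and $p\geq1$, the mean value theorem gives $1-x^p=pc^{p-1}(1-x)$ for some $c\in[x,1]$, and hence $1-x^p\leq p(1-x)$. Taking $x=\ip{\v}{\w}$ yields
\[
\Delta_p(\v,\w)\leq p\big(1-\ip{\v}{\w}\big)=\tfrac{p}{2}\|\v-\w\|^2 .
\]

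There is no real obstacle here. The only step needing care is the combinatorial rewriting of $1-\|\v^2\|^2$ as $\sum_i v_i^2\sum_{j\neq i}v_j^2$, which is what converts the entrywise lower bound $\varepsilon$ into the factor $(n-1)\varepsilon^2$.
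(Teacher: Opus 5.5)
Your proposal is correct and follows essentially the same route as the paper. You discard the first term in Theorem~\ref{thm:Deltap_lower_bound}, bound $2-\|\v^2\|^2-\|\w^2\|^2 \geq 2(n-1)\varepsilon^2$ and $1-\ip{\v}{\w}^2 \geq 1-\ip{\v}{\w} = \frac{1}{2}\|\v-\w\|^2$, and get the upper bound from Cauchy--Schwarz plus $1-x^p \leq p(1-x)$. The differences are cosmetic: the paper bounds the first factor via $\|\v^2\|^2 \leq \max_j v_j^2 \leq 1-(n-1)\varepsilon^2$ rather than expanding $1-\|\v^2\|^2$, and it uses the convexity tangent line $x^p \geq 1+p(x-1)$ instead of the mean value theorem.
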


\begin{proof}
    Since $\max_j\{v_j^2\} \leq 1-(n-1)\varepsilon^2$, we have
    \begin{align*}
        \big\|\v^2\big\|^2 = \sum_{i=1}^n v_i^4 \leq \left(\sum_{i=1}^n v_i^2\right)\max_j\big\{v_j^2\big\} = \max_j\{v_j^2\} \leq 1-(n-1)\varepsilon^2,
    \end{align*}
    and similarly for $\w$. Theorem~\ref{thm:Deltap_lower_bound} then gives
    \begin{align*}
        \Delta_p(\v,\w) & \geq \frac{1}{2}\big(\|\v^p\|-\|\w^p\|\big)^2 + C_p\big(2 - \|\v^2\|^2 - \|\w^2\|^2\big)\big(1 - \ip{\v}{\w}^2\big) \\
        & \geq C_p\big(2 - \|\v^2\|^2 - \|\w^2\|^2\big)\big(1 - \ip{\v}{\w}^2\big) \\
        & \geq 2C_p(n-1)\varepsilon^2\big(1 - \ip{\v}{\w}^2\big) \\
        & \geq C_p(n-1)\varepsilon^2\|\v-\w\|^2,
    \end{align*}
    which is the left inequality.
    
    For the right inequality, the Cauchy--Schwarz inequality tells us that
    \begin{align*}
        \Delta_p(\v,\w) = 1 - \ip{\v}{\w}^p - \big( \|\v^p\|\|\w^p\| - \ip{\v^p}{\w^p} \big) \leq 1 - \ip{\v}{\w}^p.
    \end{align*}
    Since the function $x \mapsto x^p$ is convex, we have $x^p \geq 1 + p(x-1)$ for all $x \in [0,1]$. In particular, for $x = \ip{\v}{\w}$ this gives
    \begin{align*}
        \Delta_p(\v,\w) \leq 1 - \ip{\v}{\w}^p \leq p\big(1-\ip{\v}{\w}\big) = \frac{p}{2}\|\v-\w\|^2,
    \end{align*}
    which completes the proof.
\end{proof}

For $p = 2$, if $n \geq 3$ then it is similarly true that strictly positive vectors $\v$ and $\w$ have $\Delta_2(\v,\w) = 0$ if and only if they are scalar multiples of each other. However, in dimension $n = 2$ their coordinates can also be interchanged to achieve $\Delta_2(\v,\w) = 0$.

\section{Conjecture~\ref{conj:rhs_minus_lhs}}\label{sec:second_conjecture}

This section is devoted to proving Conjecture~\ref{conj:rhs_minus_lhs}. That is, our goal is to show that
\[
    \Delta_p(\v,\w) \leq \left[1 - \left(\left\lfloor\frac{n+1}{2}\right\rfloor\left\lceil\frac{n+1}{2}\right\rceil\right)^{\frac{1-p}{2}}\right]\|\v\|^p\|\w\|^p.
\]
To prove this inequality, we will first need a pair of helper lemmas.

\begin{lemma}\label{lem:scalar}
    Let  $q \geq 1$ and $c, x \geq 0$. Let $f(x) \geq g(x) \geq 0$ be the unique numbers satisfying
    \begin{align}\label{eq:fg}
        f(x)^q - g(x)^q = c^q \qquad \text{and} \qquad f(x)g(x) = x^2.
    \end{align}
    Then
    \begin{align}\label{eq:fg_ratios}
        \frac{f(x) - c}{x} \qquad \text{and} \qquad \frac{g(x)}{x}
    \end{align}
    are nondecreasing functions of $x$ on $(0,\infty)$, and they take values in $[0,1]$.
\end{lemma}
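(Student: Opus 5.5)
Write $u := f(x)/x \ge 1$. Then $g(x)/x = 1/u$, and the first condition in \eqref{eq:fg} becomes
\[
x^q\big(u^q - u^{-q}\big) = c^q.
\]
Set $\phi(u) := u^q - u^{-q}$, which is strictly increasing from $0$ to $\infty$ on $[1,\infty)$. The equation $\phi(u) = (c/x)^q$ therefore has a unique solution $u(x) \ge 1$; this also gives existence and uniqueness of $f$ and $g$. As $x$ increases, $(c/x)^q$ decreases (or stays $0$ if $c=0$), so $u(x)$ is nonincreasing. Hence $g(x)/x = 1/u(x)$ is nondecreasing, and it lies in $(0,1]$.

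For the first ratio, I write $c/x = \phi(u)^{1/q}$, so that
\[
\frac{f(x)-c}{x} = u - \big(u^q - u^{-q}\big)^{1/q} =: \psi(u).
\]
Since $u$ is nonincreasing in $x$, it suffices to show that $\psi$ is nonincreasing on $[1,\infty)$ and takes values in $[0,1]$.

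The range is easy. We have $\psi(1) = 1$. Also $u^q - u^{-q} \le u^q$ gives $\psi \ge 0$. For $\psi \le 1$, I need $(u^q - u^{-q})^{1/q} \ge u - 1$. This follows from $u^q - (u-1)^q \ge 1 \ge u^{-q}$, where the first inequality uses superadditivity of $t \mapsto t^q$ for $q \ge 1$.

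Monotonicity is the main step. I would write
\[
\psi(u) = u\Big(1 - \big(1 - u^{-2q}\big)^{1/q}\Big).
\]
Then I substitute $s = u^{-2q} \in (0,1]$, so $u = s^{-1/(2q)}$ and
\[
\psi = s^{-1/(2q)}\Big(1 - (1-s)^{1/q}\Big).
\]
The claim becomes that $h(s) := \big(1-(1-s)^{1/q}\big)/s^{1/(2q)}$ is nondecreasing in $s$ on $(0,1]$.

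To prove this, I differentiate $\log h$. The condition $h' \ge 0$ is equivalent to
\[
\frac{(1/q)(1-s)^{1/q - 1}}{1-(1-s)^{1/q}} \ge \frac{1}{2qs},
\]
that is,
\[
2s(1-s)^{1/q-1} \ge 1 - (1-s)^{1/q}.
\]
Put $r = 1/q \in (0,1]$ and $y = 1-s \in [0,1)$. The inequality becomes $2(1-y)y^{r-1} \ge 1 - y^r$. Multiplying by $y^{1-r}$, this is $2(1-y) \ge y^{1-r} - y$, or $2 - y \ge y^{1-r}$. That holds because $y^{1-r} \le 1 \le 2 - y$.

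This verification is where I would expect any subtlety; the rest is bookkeeping. Finally, the degenerate case $c = 0$ gives $f = g = x$, and both ratios are then constant ($0$ and $1$), consistent with the claim.
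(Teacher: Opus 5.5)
Your proof is correct, and it takes a different route from the paper's.

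\textbf{The paper's route.} The paper works directly in the variable $x$. It gets the range from $f=(c^q+g^q)^{1/q}\le c+g$, which gives $0\le f-c\le g\le\sqrt{fg}=x$. It then implicitly differentiates the two defining equations to obtain $xf'(x)=2fg^q/(f^q+g^q)$ and $xg'(x)=2gf^q/(f^q+g^q)$. Monotonicity follows from $xf'-(f-c)=c-fc^q/(f^q+g^q)\ge 0$ and $xg'-g=gc^q/(f^q+g^q)\ge 0$.

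\textbf{Your route.} You instead exploit homogeneity: both ratios depend only on $c/x$, and the parametrization $u=f/x$ turns the system into $\phi(u)=(c/x)^q$ with $\phi$ strictly increasing. This has three payoffs:
\begin{itemize}
\item existence and uniqueness of $f,g$ come for free;
\item the claim about $g/x=1/u$ becomes immediate, with no calculus;
\item the claim about $(f-c)/x$ reduces to monotonicity of one explicit function of one variable, which you verify cleanly via $2-y\ge y^{1-r}$.
\end{itemize}

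\textbf{Comparison.} The paper's computation treats both ratios symmetrically and produces explicit formulas for $xf'$ and $xg'$. However, it needs differentiability of $f$ and $g$, which it uses without comment (it follows from the implicit function theorem). Your argument replaces this with an explicit formula for $\psi$ and never differentiates $g$ at all.

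\textbf{Small remarks.}
\begin{itemize}
\item When $c=0$ both ratios equal $1$, since $(f-c)/x=x/x$; they are not ``$0$ and $1$'' as you wrote. This is harmless, and the case is already covered by $u=1$ in your main argument.
\item At $s=1$ (i.e.\ $y=0$) the derivative expression and the multiplication by $y^{1-r}$ break down for $q>1$. So prove $h'\ge 0$ on $(0,1)$ and extend to $(0,1]$ by continuity. Only $c=0$ actually reaches $s=1$.
\item Your separate proof that $\psi\le 1$ is redundant once you know $\psi$ is nonincreasing with $\psi(1)=1$.
\end{itemize}
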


\begin{proof}
    If $c = 0$ then $f(x) = g(x) = x$, so both quotients equal $1$ (and thus the statement of the lemma holds). If $c > 0$ then, since $q \geq 1$, we have
    \begin{align*}
        f(x) = \big(c^q + g(x)^q\big)^{1/q} \leq c + g(x).
    \end{align*}
    It follows that
    \begin{align*}
        0\leq f(x)-c\leq g(x)\leq\sqrt{f(x)g(x)}=x.
    \end{align*}
    Dividing both sides by $x > 0$ proves that the ratios~\eqref{eq:fg_ratios} take values in $[0,1]$.

    To see that these ratios are nondecreasing in $x$, we compute their derivatives with respect to $x$:
    \[
        \frac{xf^\prime(x) - \big(f(x)-c\big)}{x^2} \qquad \text{and} \qquad \frac{xg^\prime(x) - g(x)}{x^2}.
    \]
    It thus suffices to prove that $xf^\prime(x) - \big(f(x)-c\big) \geq 0$ and $xg^\prime(x) - g(x) \geq 0$. To this end, differentiate both sides of Equations~\eqref{eq:fg} with respect to $x$ to get
    \[
        f(x)^{q-1}f^\prime(x) = g(x)^{q-1}g^\prime(x) \qquad \text{and} \qquad f^\prime(x)g(x) + f(x)g^\prime(x) = 2x.
    \]
    Multiplying both sides of the equation on the right by $g(x)^{q-1}$ and then substituting the left equation gives
    \[
        2xg(x)^{q-1} = f^\prime(x)g(x)^q + f(x)g(x)^{q-1}g^\prime(x) = f^\prime(x)(g(x)^q + f(x)^q).
    \]
    Multiplying both sides by $x$, dividing both sides by $f(x)^q + g(x)^q$ and then using the fact that $f(x)g(x) = x^2$ shows
    \begin{align}\label{eq:xfprime}
        xf^\prime(x) = \frac{2x^2g(x)^{q-1}}{f(x)^q + g(x)^q} = \frac{2f(x)g(x)^q}{f(x)^q + g(x)^q}.
    \end{align}
    An almost identical argument shows that
    \begin{align}\label{eq:xgprime}
        xg^\prime(x) = \frac{2g(x)f(x)^q}{f(x)^q + g(x)^q}.
    \end{align}
    By using Equation~\eqref{eq:xfprime} and plugging in $f(x)^q - g(x)^q = c^q$, we see that
    \begin{align*}
        xf^\prime(x) - \big(f(x)-c\big) & = c + \frac{f(x)(g(x)^q - f(x)^q)}{f(x)^q + g(x)^q} = c - \frac{f(x)c^q}{f(x)^q+g(x)^q} \geq 0,
    \end{align*}
    where the inequality follows from $f(x) \geq c$ and hence $f(x)c^{q-1}\leq f(x)^q$. Similarly, using Equation~\eqref{eq:xgprime} and plugging in $f(x)^q - g(x)^q = c^q$ shows that
    \begin{align*}
        xg^\prime(x) - g(x) & = \frac{g(x)(f(x)^q - g(x)^q)}{f(x)^q + g(x)^q} = \frac{g(x)c^q}{f(x)^q+g(x)^q} \geq 0.
    \end{align*}
    These are exactly the inequalities that we needed to prove, so we are done.
\end{proof}

The next lemma is motivated by the fact that the extremal pairs of vectors in Conjecture~\ref{conj:rhs_minus_lhs} (described in Section~\ref{sec:conjectures}) seem to be $\v = (1,1,\ldots,1,0,0,\ldots,0)$ with $\lfloor (n+1)/2\rfloor$ nonzero entries and $\w = (0,0,\ldots,0,1,1,\ldots,1)$ with $\lceil (n+1)/2\rceil$ nonzero entries, which have supports that overlap in a single element. This lemma will be used to show that this single-entry overlap of supports is indeed optimal.

\begin{lemma}\label{lem:concentration}
    Suppose $p \geq 2$ and $\x,\y \in \R_{\geq 0}^n$. Then there exist $\widetilde{\x}, \widetilde{\y} \in \R_{\geq 0}^n$ such that all of the following hold:
    \begin{itemize}
        \item $\widetilde{x}_j^p - \widetilde{y}_j^p = x_j^p - y_j^p$ for all $j \in \{1,2,\ldots,n\}$.

        \item $\ip{\widetilde{\x}}{\widetilde{\y}} = \ip{\x}{\y}$.

        \item $\|\widetilde{\x}\|\|\widetilde{\y}\| \geq \|\x\|\|\y\|$.

        \item $|\supp(\widetilde{\x}) \cap \supp(\widetilde{\y})| \leq 1$.
    \end{itemize}
\end{lemma}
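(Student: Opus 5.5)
The plan is to reparametrize each coordinate by the product $s_j = x_jy_j$, using Lemma~\ref{lem:scalar}, and then to move this product mass between coordinates in a way that preserves the first two bullet points and does not decrease $\|\x\|\|\y\|$.

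\textbf{Reparametrization.} Fix $j$. Set $c_j := |x_j^p - y_j^p|^{1/p}$ and apply Lemma~\ref{lem:scalar} with $q = p$ and $c = c_j$. The pair $\{x_j,y_j\}$ is then exactly $\{f_j(\sqrt{s_j}), g_j(\sqrt{s_j})\}$: the larger entry is $f_j$, the smaller is $g_j$, and which of $x_j, y_j$ is the larger one is determined by the sign of $x_j^p - y_j^p$.

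If we replace $s_j$ by any other value $s_j' \geq 0$, keep the same sign convention, and put $x_j' , y_j'$ equal to $f_j(\sqrt{s_j'}), g_j(\sqrt{s_j'})$ in the same order, then:
\begin{itemize}
\item the difference $x_j'^p - y_j'^p = x_j^p - y_j^p$ is automatically preserved;
\item the inner product equals $\sum_j s_j'$, so it is preserved as long as $\sum_j s_j'$ is unchanged;
\item $s_j' = 0$ forces $g_j = 0$, so coordinate $j$ leaves $\supp(\x')\cap\supp(\y')$.
\end{itemize}
Thus the first two bullets reduce to conserving total mass $S := \sum_j s_j$, and the support condition amounts to at most one coordinate carrying positive $s_j'$. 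The remaining task is to choose the redistribution so that $\|\x'\|^2\|\y'\|^2 \geq \|\x\|^2\|\y\|^2$.

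\textbf{Two-coordinate step.} Suppose two coordinates $i \neq j$ both have $s_i, s_j > 0$. Consider the one-parameter family $s_i(\tau) = s_i + \tau$, $s_j(\tau) = s_j - \tau$ for $\tau \in [-s_i, s_j]$. Write $A(\tau) = \|\x(\tau)\|^2$ and $B(\tau) = \|\y(\tau)\|^2$. Each is a sum of the fixed contributions of the other coordinates and the terms $\phi_k(s) := f_k(\sqrt{s})^2$ or $\psi_k(s) := g_k(\sqrt{s})^2$ for $k \in \{i,j\}$.

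Lemma~\ref{lem:scalar} says $\sqrt{\psi_k(s)/s}$ and $(\sqrt{\phi_k(s)} - c_k)/\sqrt{s}$ are nondecreasing in $s$. The goal is to show that $A(\tau)B(\tau)$ attains its maximum at an endpoint of $[-s_i,s_j]$. Moving to that endpoint kills one overlap coordinate without decreasing $\|\x\|\|\y\|$. Iterating at most $n-1$ times yields the desired $\widetilde{\x},\widetilde{\y}$.

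\textbf{Main obstacle.} The hard step is showing that $A(\tau)B(\tau)$ is maximized at an endpoint. My first attempt would be to prove that $\phi_k$ and $\psi_k$ are convex in $s$. Convexity of $\psi_k$ should follow from $xg'(x) - g(x) \geq 0$ together with the explicit derivative formula~\eqref{eq:xgprime}, differentiated once more. The same approach should work for $\phi_k$ using~\eqref{eq:xfprime}. This would make $A$ and $B$ each convex in $\tau$. However, a product of convex functions need not be maximized at an endpoint, so I would add one more ingredient:
\begin{itemize}
\item either show that $\log A$ and $\log B$ are convex;
\item or compare the two endpoint values directly with the interior value, using the star-shapedness implied by the monotone ratios.
\end{itemize}
The second option rests on the following consequence of Lemma~\ref{lem:scalar}. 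Because $\psi_k(s)/s$ is nondecreasing and $\psi_k(0)=0$, we get $\psi_k(s) \leq (s/S)\psi_k(S)$ for $s \leq S$. The analogous bound holds for $\sqrt{\phi_k} - c_k$. These bounds should show that a suitable convex combination of the two endpoint products dominates the interior product.

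If the pairwise argument resists, I would instead move all the mass $S$ at once onto a single well-chosen coordinate $k$. I would pick $k$ to maximize a combined ratio at level $S$, and bound each zeroed coordinate's loss against coordinate $k$'s gain using the same star-shaped estimates.
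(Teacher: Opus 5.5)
The step that carries the whole lemma is never proved: that the current configuration is dominated by one with all the product mass on a single coordinate. The tools you propose for it cannot prove it either.

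Your reparametrization applies Lemma~\ref{lem:scalar} with $q=p$ to $(x_j,y_j)$. The facts you extract are that $g_j(\sqrt{s})/\sqrt{s}$ and $(f_j(\sqrt{s})-c_j)/\sqrt{s}$ are nondecreasing. These hold for every $p\geq 1$, and nothing in your plan uses $p\geq 2$. But the conclusion of the lemma fails for $p=1$. Take $\x=(1,\varepsilon)$ and $\y=(\varepsilon,1)$. Since $n=2$ and $\ip{\x}{\y}=2\varepsilon>0$, the only admissible $\widetilde{\x},\widetilde{\y}$ are the two fully concentrated configurations. Both give $\|\widetilde{\x}\|^2\|\widetilde{\y}\|^2=1-2\varepsilon^2+O(\varepsilon^3)<(1+\varepsilon^2)^2$; at $\varepsilon=0.1$ this is about $0.9925<1.0201$. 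So no argument built only from those ingredients can succeed.

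The specific routes you sketch also fail on their own terms:
\begin{itemize}
\item $\psi_k$ need not be convex. For $p=4$ and $c_k=1$ you get $\psi_k(s)=\big((\sqrt{1+4s^4}-1)/2\big)^{1/2}=s-\tfrac{1}{4s}+O(s^{-3})$, which is strictly concave for large $s$. Hence $B(\tau)$ is in general neither convex nor log-convex.
\item The star-shaped bound you obtain for $\phi_k$ controls $\sqrt{\phi_k}-c_k$, with a factor $\sqrt{s/S}$. What enters $A$ is $\phi_k-c_k^2$, and you need it bounded by $s/S$ times its value at $S$.
\item Even with linear chord bounds on $A$ and $B$ separately, you would only bound $A(\tau)B(\tau)$ by a product of two affine functions of $\lambda=s_i/(s_i+s_j)$. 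Such a product can exceed the chord joining the endpoint values.
\end{itemize}

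Two things are missing: the right monotone quantity, and a way to linearize the product. Apply Lemma~\ref{lem:scalar} with $q=p/2$ to the squared pair $(x_j^2,y_j^2)$, with $c=|x_j^p-y_j^p|^{2/p}$, at the point $h_j:=x_jy_j$ (your $s_j$). This is exactly where $p\geq 2$ is needed, since the lemma requires $q\geq 1$. Set $a_j=\max\{x_j^p-y_j^p,0\}^{2/p}$ and $b_j=\max\{y_j^p-x_j^p,0\}^{2/p}$, so that $a_jb_j=0$. The lemma then gives two facts:
\begin{itemize}
\item $0\leq x_j^2-a_j\leq h_j$ and $0\leq y_j^2-b_j\leq h_j$;
\item $(x_j^2-a_j)/h_j$ and $(y_j^2-b_j)/h_j$ are nondecreasing in $h_j$.
\end{itemize}
The first fact bounds the off-diagonal cross terms by $(x_i^2-a_i)(y_j^2-b_j)\leq h_ih_j$, and the diagonal terms are exact because $a_jb_j=0$. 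With $A=\sum_j a_j$, $B=\sum_j b_j$ and $H=\sum_j h_j$, this yields
\[
\|\x\|^2\|\y\|^2\leq H^2+AB+\sum_{j}\big[(A-a_j)(y_j^2-b_j)+(B-b_j)(x_j^2-a_j)\big].
\]
The right-hand side is affine, with nonnegative coefficients, in the excesses $x_j^2-a_j$ and $y_j^2-b_j$. The second fact bounds each excess by $h_j/H$ times its value when all of the mass $H$ sits on coordinate $j$. So $\|\x\|^2\|\y\|^2$ is at most the $h_j/H$-weighted average of the $n$ fully concentrated values, and one of them must be at least $\|\x\|^2\|\y\|^2$.

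This is your fallback, except that the coordinate is found by averaging rather than by maximizing a ratio. The same linearization, with the other coordinates held fixed as background, would also rescue your two-coordinate step.
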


\begin{proof}
    Define $h_j := x_jy_j$ for $j \in \{1,2,\ldots,n\}$ and $H := \sum_{j=1}^n h_j = \ip{\x}{\y}$. If $H = 0$ then $|\supp(\x) \cap \supp(\y)| = 0$, so assume from now on that $H > 0$. For each $j$, define
    \begin{align*}
        a_j := \max\big\{x_j^p - y_j^p, 0\big\}^{2/p} \qquad \text{and} \qquad b_j := \max\big\{y_j^p - x_j^p, 0\big\}^{2/p},
    \end{align*}
    and notice that $a_jb_j = 0$.
    
    Suppose for now that $x_j \geq y_j$. Applying Lemma~\ref{lem:scalar} when $q = p/2$, $c = a_j + b_j = |x_j^p - y_j^p|^{2/p}$, and $x = h_j$ gives $f(h_j) = x_j^2$ and $g(h_j) = y_j^2$ (or $f(h_j) = y_j^2$ and $g(h_j) = x_j^2$ if $x_j < y_j$). The bounds from that lemma then tell us that
    \begin{align}\label{ineq:fah}
        0 \leq f(h_j) - a_j = x_j^2 - a_j \leq h_j \quad \text{and} \quad 0 \leq g(h_j) = y_j^2 - b_j \leq h_j,
    \end{align}
    where the equality on the right uses the fact that $b_j = 0$. Now let $u_j, v_j \geq 0$ be the unique numbers satisfying
    \begin{align*}
        u_j^{p/2} - v_j^{p/2} = x_j^p - y_j^p \qquad \text{and} \qquad u_jv_j = H^2.
    \end{align*}
    In other words, $u_j$ and $v_j$ are the numbers $f(x)$ and $g(x)$ from Lemma~\ref{lem:scalar} when $q = p/2$, $c = a_j+b_j$, and $x = H$. Since $h_j \leq H$, applying monotonicity from that lemma shows that
    \begin{align}\label{ineq:ratios_monotonic}
        x_j^2 - a_j \leq \frac{h_j}{H}(u_j - a_j) \qquad \text{and} \qquad y_j^2-b_j \leq \frac{h_j}{H}(v_j-b_j).
    \end{align}
    If we instead have $x_j < y_j$ then interchanging the roles of $f(h_j)$ and $g(h_j)$, and the roles of $f(H)$ and $g(H)$, gives the same bounds on $x_j^2 - a_j$ and $y_j^2 - b_j$.

    For $i \neq j$, Inequality~\eqref{ineq:fah} gives
    \begin{align}\label{ineq:unequal_indices}
        (x_i^2 - a_i)(y_j^2 - b_j) \leq h_i h_j,
    \end{align}
    while $a_jb_j = 0$ gives
    \begin{align}\label{eq:equal_indices}
        (x_j^2 - a_j)(y_j^2 - b_j) = h_j^2 - a_j(y_j^2-b_j) - b_j(x_j^2-a_j).
    \end{align}
    If we define $A := \sum_{j=1}^n a_j$ and $B := \sum_{j=1}^n b_j$, then
    \begin{align}\begin{split}\label{eq:xy_sq_norm_expand}
        \|\x\|^2\|\y\|^2 & = \left(A + \sum_{j=1}^n\big(x_j^2 - a_j\big)\right)\left(B + \sum_{j=1}^n\big(y_j^2 - b_j\big)\right) \\
        & = AB + A\sum_{j=1}^n\big(y_j^2 - b_j\big) + B\sum_{j=1}^n\big(x_j^2 - a_j\big) + \sum_{i,j=1}^n\big(x_i^2 - a_i\big)\big(y_j^2 - b_j\big).
    \end{split}\end{align}
    Using Inequality~\eqref{ineq:unequal_indices} and Equation~\eqref{eq:equal_indices} shows that
    \begin{align*}
        \sum_{i,j=1}^n\big(x_i^2 - a_i\big)\big(y_j^2 - b_j\big) & \leq \sum_{\substack{i,j=1 \\ i\neq j}}^n h_i h_j + \sum_{j=1}^n \big( h_j^2 - a_j(y_j^2-b_j) - b_j(x_j^2-a_j) \big) \\
        & = H^2 - \sum_{j=1}^n \big( a_j(y_j^2-b_j) + b_j(x_j^2-a_j) \big).
    \end{align*}
    Plugging this into Equation~\eqref{eq:xy_sq_norm_expand} shows that
    \begin{align*}
        \|\x\|^2\|\y\|^2 & \leq H^2 + AB + A\sum_{j=1}^n\big(y_j^2 - b_j\big) + B\sum_{j=1}^n\big(x_j^2 - a_j\big) - \sum_{j=1}^n \big( a_j(y_j^2-b_j) + b_j(x_j^2-a_j) \big) \\
        & = H^2 + AB + \sum_{j=1}^n\big((A-a_j)(y_j^2-b_j) + (B-b_j)(x_j^2-a_j)\big) \\
        & \leq \sum_{j=1}^n\frac{h_j}{H}\big(H^2 + AB + (A-a_j)(v_j-b_j) + (B-b_j)(u_j-a_j)\big) \\
        & = \sum_{j=1}^n\frac{h_j}{H}(A-a_j+u_j)(B-b_j+v_j),
    \end{align*}
    where the second inequality comes from Inequality~\eqref{ineq:ratios_monotonic} and the fact that $\sum_{j=1}^n h_j/H = 1$, and the final equality uses the facts that $H^2 = u_jv_j$ and $a_jb_j = 0$.

    Since $h_j/H \geq 0$ and $\sum_{j=1}^n h_j/H = 1$, there exists some index $j$ for which
    \begin{align*}
        \|\x\|^2\|\y\|^2 \leq (A-a_j+u_j)(B-b_j+v_j).
    \end{align*}
    Choose this index and define
    \begin{align*}
        \widetilde{x}_i := \begin{cases}
            \sqrt{u_j} & \text{if } i = j,\\
            \sqrt{a_i} & \text{if } i \neq j.
        \end{cases} \qquad \text{and} \qquad
        \widetilde{y}_i := \begin{cases}
            \sqrt{v_j} & \text{if } i = j,\\
            \sqrt{b_i} & \text{if } i \neq j.
        \end{cases}
    \end{align*}
    Then $\supp(\widetilde{\x}) \cap \supp(\widetilde{\y}) = \{j\}$, $\ip{\widetilde{\x}}{\widetilde{\y}} = H = \ip{\x}{\y}$, and $\widetilde{x}_i^p - \widetilde{y}_i^p = x_i^p - y_i^p$ for all $i$. Furthermore,
    \begin{align*}
        \big\|\widetilde{\x}\big\|^2\big\|\widetilde{\y}\big\|^2 = (A-a_j+u_j)(B-b_j+v_j) \geq \|\x\|^2\|\y\|^2,
    \end{align*}
    which completes the proof.
\end{proof}

With these lemmas taken care of, we can now prove Conjecture~\ref{conj:rhs_minus_lhs}.

\begin{theorem}\label{thm:rhs_minus_lhs}
    Let $p \geq 2$ be a real number. For any pair of vectors $\v,\w \in \R_{\geq 0}^n$, we have
    \begin{align*}
        \Delta_p(\v,\w) \leq \left[1 - \left(\left\lfloor\frac{n+1}{2}\right\rfloor\left\lceil\frac{n+1}{2}\right\rceil\right)^{\frac{1-p}{2}}\right]\|\v\|^p\|\w\|^p.
    \end{align*}
\end{theorem}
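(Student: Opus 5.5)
The plan is to reduce to vectors whose supports overlap in at most one coordinate, using Lemma~\ref{lem:concentration}, and then finish with a power-mean estimate. Let $N := \lfloor (n+1)/2\rfloor\lceil (n+1)/2\rceil$. Rearranged, the inequality to prove is
\[
    \Phi(\v,\w) := \|\v^p\|\|\w^p\| - \ip{\v^p}{\w^p} + \ip{\v}{\w}^p \geq N^{\frac{1-p}{2}}\|\v\|^p\|\w\|^p .
\]
Both sides are homogeneous of degree $2p$ under $(\v,\w)\mapsto(\lambda\v,\lambda\w)$, and separately homogeneous in $\v$ and in $\w$. So after discarding the trivial cases $\v=\0$ or $\w=\0$, I may normalize freely.

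\textbf{Step 1 (reduction).} Apply Lemma~\ref{lem:concentration} with $\x=\v$, $\y=\w$ to obtain $\widetilde{\v},\widetilde{\w}$. These have the same inner product, the same coordinatewise differences $\widetilde v_j^p-\widetilde w_j^p = v_j^p-w_j^p$, a product of norms at least as large, and supports meeting in at most one index.

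The right-hand side $N^{\frac{1-p}{2}}\|\v\|^p\|\w\|^p$ can only grow under this replacement, and $\ip{\v}{\w}^p$ is unchanged. It therefore suffices to show that the remaining part of $\Phi$ does not increase:
\[
    \|\widetilde{\v}^p\|\|\widetilde{\w}^p\| - \ip{\widetilde{\v}^p}{\widetilde{\w}^p} \leq \|\v^p\|\|\w^p\| - \ip{\v^p}{\w^p}.
\]
To see this, write $\v^p = \mathbf{c}+\mathbf{d}^+$ and $\w^p=\mathbf{c}+\mathbf{d}^-$, where $\mathbf{c}=\min(\v^p,\w^p)$ entrywise and $\mathbf{d}^\pm$ are the positive and negative parts of $\v^p-\w^p$, which are preserved by the lemma. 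The map $\mathbf{c}\mapsto \|\mathbf{c}+\mathbf{d}^+\|\|\mathbf{c}+\mathbf{d}^-\| - \ip{\mathbf{c}+\mathbf{d}^+}{\mathbf{c}+\mathbf{d}^-}$ should be nonincreasing as mass of $\mathbf{c}$ is added along directions disjoint from $\mathbf{d}^\pm$ or concentrated in one coordinate. In the construction of Lemma~\ref{lem:concentration}, $\widetilde{\mathbf{c}}$ is supported on the single index $j$. I would try to prove the needed comparison using Cauchy--Schwarz in the form $\|\mathbf{a}\|\|\mathbf{b}\| - \ip{\mathbf{a}}{\mathbf{b}} = \tfrac{1}{2}\big\|\,\|\mathbf{b}\|\mathbf{a}/\|\mathbf{a}\|-\ldots\big\|^2$-type identities, or else by redoing the lemma's convexity bookkeeping for this quantity. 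I expect this step to be the main obstacle. If it fails, the fallback is to prove the inequality $\Phi \ge N^{\frac{1-p}{2}}\|\v\|^p\|\w\|^p$ for the tilde vectors directly, using the lemma's third bullet to compare norms.

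\textbf{Step 2 (single-overlap case).} Now assume $\supp(\v)=S$ and $\supp(\w)=T$ with $|S\cap T|\le 1$. Then $|S|+|T|\le n+1$, so $|S||T|\le N$. If $S\cap T=\emptyset$, then $\Phi=\|\v^p\|\|\w^p\|$. The power-mean inequality $\|\v^p\| = \big(\sum_{S} v_i^{2p}\big)^{1/2} \ge |S|^{\frac{1-p}{2}}\|\v\|^p$, together with its analogue for $\w$, gives $\Phi\ge (|S||T|)^{\frac{1-p}{2}}\|\v\|^p\|\w\|^p \ge N^{\frac{1-p}{2}}\|\v\|^p\|\w\|^p$.

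If $S\cap T=\{j\}$, write $\v=\v'+\alpha\e_j$ and $\w=\w'+\beta\e_j$ with $\v',\w'$ disjointly supported away from $j$. Then
\[
    \Phi = \sqrt{(\|\v'^p\|^2+\alpha^{2p})(\|\w'^p\|^2+\beta^{2p})} - \alpha^p\beta^p + \alpha^p\beta^p \ge \sqrt{(\|\v'^p\|^2+\alpha^{2p})(\|\w'^p\|^2+\beta^{2p})}.
\]
Applying the power-mean bound on $S$ and on $T$, which contain $j$, yields $\Phi\ge(|S||T|)^{\frac{1-p}{2}}\|\v\|^p\|\w\|^p$. This again gives the claim, and equality holds for the flat vectors described in Section~\ref{sec:conjectures}.
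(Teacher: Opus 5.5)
Your Step~2 is correct; it is the same H\"older/power-mean finish the paper uses. Step~1, however, has a genuine gap: the comparison you propose there is false, not merely unproven. Use the identity $\|\mathbf{a}\|\|\mathbf{b}\|-\ip{\mathbf{a}}{\mathbf{b}} = \tfrac12\|\mathbf{a}-\mathbf{b}\|^2-\tfrac12(\|\mathbf{a}\|-\|\mathbf{b}\|)^2$ together with the fact that Lemma~\ref{lem:concentration} preserves $\v^p-\w^p$. Then your claimed inequality is equivalent to $\big|\|\widetilde{\v}^p\|-\|\widetilde{\w}^p\|\big|\geq\big|\|\v^p\|-\|\w^p\|\big|$, and nothing forces this.

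Concretely, take $n=2$, $\v=(1,1)$, $\w=(s,s)$ with $0<s<1$. Then $\v^p$ and $\w^p$ are parallel, so their Cauchy--Schwarz deficit is $0$. Any $\widetilde{\v},\widetilde{\w}$ with the lemma's four properties satisfy $\widetilde v_i^p=\widetilde w_i^p+1-s^p>0$ for $i=1,2$ and $\ip{\widetilde{\v}}{\widetilde{\w}}=2s>0$. So $\widetilde{\w}$ is supported on exactly one index, $\widetilde{\v}^p$ and $\widetilde{\w}^p$ are not parallel, and their deficit is strictly positive. For $p=2$, $s=1/2$ one gets $\Phi(\widetilde{\v},\widetilde{\w})\approx 1.127>1=\Phi(\v,\w)$. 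So the link $\Phi(\v,\w)\geq\Phi(\widetilde{\v},\widetilde{\w})$ fails as you set it up. Your fallback does not repair it, because a lower bound on $\Phi(\widetilde{\v},\widetilde{\w})$ says nothing about $\Phi(\v,\w)$.

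The missing idea is to \emph{balance} before concentrating. Both $\Phi$ and $\|\v\|^p\|\w\|^p$ are unchanged under $(\v,\w)\mapsto(t\v,\w/t)$, so you may first choose $t$ so that $\|\v^p\|=\|\w^p\|$. For the balanced pair, the deficit is exactly $\tfrac12\|\v^p-\w^p\|^2$, which the lemma's first bullet preserves. For the concentrated pair, AM--GM gives $\|\widetilde{\v}^p\|\|\widetilde{\w}^p\|-\ip{\widetilde{\v}^p}{\widetilde{\w}^p}\leq\tfrac12\|\widetilde{\v}^p-\widetilde{\w}^p\|^2=\tfrac12\|\v^p-\w^p\|^2$. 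This yields $\Phi(\v,\w)\geq\Phi(\widetilde{\v},\widetilde{\w})$. Your Step~2, together with $\|\widetilde{\v}\|\|\widetilde{\w}\|\geq\|\v\|\|\w\|$, then completes the proof.

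This is exactly the paper's argument, which writes the balanced left-hand side as $\tfrac12\sum_j(x_j^p-y_j^p)^2+\ip{\x}{\y}^p$, visibly invariant under the lemma. The rescaled-difference identity you gesture at is this balancing in disguise. The point is that it must be applied to the original pair before invoking the lemma, not used to compare the deficits of two unbalanced pairs. In the example above, balancing makes $\v$ and $\w$ equal, and then the lemma produces $\widetilde{\v}=\widetilde{\w}$.
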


\begin{proof}
    If either $\v$ or $\w$ equals $\0$ then the theorem is trivial, so we assume from now on that $\v, \w \neq \0$. Since both sides of the inequality are homogeneous of degree $p$ in each vector, we may assume without loss of generality that $\|\v\| = \|\w\| = 1$. With this assumption, our goal is to prove that
    \begin{align}\label{eq:deficit_complement}
        \|\v^p\|\|\w^p\| - \ip{\v^p}{\w^p} + \ip{\v}{\w}^p \geq \left(\left\lfloor\frac{n+1}{2}\right\rfloor\left\lceil\frac{n+1}{2}\right\rceil\right)^{\frac{1-p}{2}}.
    \end{align}
    
    For any real $t > 0$, the left-hand side is unchanged when $\v$ and $\w$ are replaced by $\x := t\v$ and $\y := \w/t$, respectively. Choose $t$ so that $\|\x^p\| = \|\y^p\|$. It is not the case in general that $\|\x\| = \|\y\| = 1$, but we do have $\|\x\|\|\y\| = 1$. The left-hand side of Inequality~\eqref{eq:deficit_complement} now equals
    \begin{align}\label{eq:preserved_quantity}
        N(\x,\y) := \frac{1}{2}\sum_{j=1}^n\big(x_j^p - y_j^p\big)^2 + \left(\sum_{j=1}^n x_j y_j\right)^p.
    \end{align}
    If we apply Lemma~\ref{lem:concentration} to $\x$ and $\y$ to obtain vectors $\widetilde{\x}$ and $\widetilde{\y}$, then we have $N(\widetilde{\x},\widetilde{\y}) = N(\x,\y)$ and $\|\widetilde{\x}\|\|\widetilde{\y}\| \geq \|\x\|\|\y\| = 1$. Furthermore, there exists at most one index $i$ for which $\widetilde{x}_i\widetilde{y}_i \neq 0$, so
    \begin{align*}
        \left(\sum_{j=1}^n\widetilde{x}_j\widetilde{y}_j\right)^p = \sum_{j=1}^n\widetilde{x}_j^p\widetilde{y}_j^p.
    \end{align*}
    Equation~\eqref{eq:preserved_quantity} thus simplifies to
    \begin{align*}
        N(\x,\y) = \frac{1}{2}\big(\|\widetilde{\x}^p\|^2 + \|\widetilde{\y}^p\|^2\big) \geq \|\widetilde{\x}^p\|\|\widetilde{\y}^p\|.
    \end{align*}
    Define $r := |\supp(\widetilde{\x})|$ and $s := |\supp(\widetilde{\y})|$. Then H\"older's inequality gives
    \begin{align*}
        \|\widetilde{\x}^p\| \geq r^{\frac{1-p}{2}}\|\widetilde{\x}\|^p \qquad \text{and} \qquad \|\widetilde{\y}^p\| \geq s^{\frac{1-p}{2}}\|\widetilde{\y}\|^p.
    \end{align*}
    Since $r + s \leq n+1$, it follows that
    \begin{align*}
        N(\x,\y) \geq (rs)^{\frac{1-p}{2}}\big(\|\widetilde{\x}\|\|\widetilde{\y}\|\big)^p \geq (rs)^{\frac{1-p}{2}} \geq \left(\left\lfloor\frac{n+1}{2}\right\rfloor\left\lceil\frac{n+1}{2}\right\rceil\right)^{\frac{1-p}{2}},
    \end{align*}
    which completes the proof.
\end{proof}

\section*{Acknowledgements}
The authors used ChatGPT-6 Astra for exploratory analysis, to simplify proofs, and to come up with some of the uglier inequalities used in the proofs of Lemmas~\ref{lem:scalar} and~\ref{lem:concentration}. The authors wrote the paper themselves and take full responsibility for its correctness. N.J.\ was supported by NSERC Discovery Grant number RGPIN-2022-04098.

\bibliographystyle{alpha}
\bibliography{ref}

\begin{thebibliography}{JPTV26}

\bibitem[BD95]{BD95}
Rajendra Bhatia and Chandler Davis.
\newblock A {C}auchy--{S}chwarz inequality for operators with applications.
\newblock {\em Linear Algebra and its Applications}, 223--224:119--129, 1995.

\bibitem[FH77]{FH77}
Carl~H. FitzGerald and Roger~A. Horn.
\newblock On fractional {H}adamard powers of positive definite matrices.
\newblock {\em Journal of Mathematical Analysis and Applications}, 61(3):633--642, 1977.

\bibitem[GKR15]{GKR15}
Dominique Guillot, Apoorva Khare, and Bala Rajaratnam.
\newblock Complete characterization of {Hadamard} powers preserving {Loewner} positivity, monotonicity, and convexity.
\newblock {\em Journal of Mathematical Analysis and Applications}, 425(1):489--507, 2015.

\bibitem[HJ13]{HJ13}
Roger~A. Horn and Charles~R. Johnson.
\newblock {\em Matrix Analysis}.
\newblock Cambridge University Press, second edition, 2013.

\bibitem[JM19]{JM19}
Nathaniel Johnston and Olivia MacLean.
\newblock Pairwise completely positive matrices and conjugate local diagonal unitary invariant quantum states.
\newblock {\em Electronic Journal of Linear Algebra}, 35:156--180, 2019.

\bibitem[Joh18]{MO_Lemma}
Nathaniel Johnston.
\newblock A strengthening of the {C}auchy--{S}chwarz inequality.
\newblock MathOverflow, 2018.
\newblock \url{https://mathoverflow.net/q/301844}.

\bibitem[Joh21]{JohALA}
Nathaniel Johnston.
\newblock {\em Advanced Linear and Matrix Algebra}.
\newblock Springer International Publishing, 2021.

\bibitem[JPTV26]{JPTV26}
Nathaniel Johnston, Sarah Plosker, Charles Torrance, and Luis M.~B. Varona.
\newblock Generalizing the {Cauchy--Schwarz} inequality: {Hadamard} powers and tensor products.
\newblock {\em Linear and Multilinear Algebra}, 74(13):1779--1798, 2026.

\bibitem[Ste04]{Ste04}
J.~Michael Steele.
\newblock {\em The {C}auchy--{S}chwarz master class: An introduction to the art of mathematical inequalities}.
\newblock Cambridge University Press, 2004.

\end{thebibliography}

\end{document}